\DeclareMathOperator{\Hom}{Hom}
\DeclareMathSymbol{\ast}{\mathbin}{symbols}{"03}
\DeclareMathOperator{\Res}{Res}
\DeclareMathOperator{\Vol}{Vol}
\DeclareMathOperator{\conv}{conv}
\DeclareMathOperator{\rc}{rc}
\DeclareMathOperator{\HF}{HF}
\DeclareMathOperator{\MV}{MV}
\DeclareMathOperator{\trop}{trop}
\newcommand{\cc}[1]{{#1}}
\begin{document}

\title{Mixed subdivisions suitable for the greedy Canny-Emiris formula
}


\author{Carles Checa         \and
        Ioannis Z. Emiris 
}


\institute{C. Checa \at
National \& Kapodistrian University of Athens, and Athena Research Center,
              \email{ccheca@di.uoa.gr}, orcid.org/0000-0003-3477-8876        
           \and
           Ioannis Z. Emiris \at
 Athena Research Center, and National \& Kapodistrian University of Athens,
              \email{emiris@athenarc.gr},  
            orcid.org/0000-0002-2339-5303
}

\date{\today}

\maketitle

\begin{abstract}
The Canny-Emiris formula \cite{cannyemiris93} gives the sparse resultant as the ratio of the determinant of a Sylvester-type matrix over a minor of it, both obtained via a mixed subdivision algorithm.
In \cite{issacversion}, the same authors gave an explicit class of mixed subdivisions 
for the greedy approach so that the formula holds, and 
the dimension of the constructed matrices is smaller
than that of the subdivision algorithm, following the approach of Canny and Pedersen in \cite{cannypedersen}. Our method improves upon the dimensions of the matrices when the Newton polytopes are zonotopes and the systems are multihomogeneous. In this text, we provide more such cases, and we conjecture which might be the liftings providing minimal size of the resultant matrices. We also describe two applications of this formula, namely in computer vision and in the implicitization of surfaces, while offering the corresponding \textit{JULIA} code. We finally introduce a novel tropical approach that leads to an alternative proof of a result in \cite{issacversion}.

\keywords{ Combinatorics, resultant theory, mixed subdivision, zonotopes, tropical geometry }
\subclass{17-08, }
\end{abstract}

\section{Introduction}
\label{section:1}

Sparse (or toric) resultants offer today a standard as well as efficient way of studying algebraic systems while exploiting their structure. They have numerous applications in elimination and implicitization theory and many other areas of algebraic geometry. We examine matrix-based methods for expressing and computing this resultant, since they allow us to reduce critical questions to linear algebra computations. 
\medskip

The Canny-Emiris formula was stated in \cite{cannyemiris93} as a rational formula for the sparse resultant that generalizes Macaulay's classic formula in \cite{macaulay}. It gives a combinatorial construction of a Sylvester-type matrix $\mathcal{H}_{\mathcal{A},\rho}$ depending on the family of polynomial supports $\mathcal{A} = (\mathcal{A}_0,\dots,\mathcal{A}_n)$ in a lattice $M$ of rank $n$, and a mixed subdivision $S(\rho)$ defined from a lifting function $\rho$ on the Minkowski sum $\Delta$ of the Newton polytopes $\Delta_i = \conv(\mathcal{A}_i)$. Each row of this matrix corresponds to a lattice point $b \in M$ contained in a translation $\delta$ of the polytope $\Delta$. Moreover, to each lattice point we can associate a type vector $t_b = (t_{b,0},\dots,t_{b,n})$ corresponding to the dimensions of the components $D_i \subset \Delta_i$ of the cell $D \in S(\rho)$ in which $b$ is lying. Imposing that
\[ 
\sum_{i = 0}^nt_{b,i} = n, \quad \forall b \in (\Delta + \delta) \cap M,
\] 
it is possible to build such matrix $\mathcal{H}_{\mathcal{A},\rho}$ and a principal submatrix $\mathcal{E}_{\mathcal{A},\rho}$ of $\mathcal{H}_{\mathcal{A},\rho}$ so that the sparse resultant can be expressed in the form:

\[\Res_{\mathcal{A}} = \frac{\det(\mathcal{H}_{\mathcal{A},\rho})}{\det(\mathcal{E}_{\mathcal{A},\rho})}.\]

D'Andrea, Jerónimo, and Sombra proved this formula in their seminal article \cite{dandrea2020cannyemiris} under the assumption that $S(\rho)$ admits an incremental chain of mixed subdivisions:
\[ 
S(\theta_0) \preceq \dots \preceq S(\theta_n) \preceq S(\rho)\]
satisfying some combinatorial properties, where $\preceq$ denotes that $S(\theta_i)$ refines $S(\theta_{i-1})$, namely each cell $D \in S(\theta_i)$ is contained in a cell of $S(\theta_{i-1})$. This proof extended the first proof given by D'Andrea in \cite{dandrea2002} for generalized unmixed systems. 

\medskip
On the other hand, and quite earlier, Canny and Pedersen gave in \cite{cannypedersen} another approach for the construction of the matrix $\mathcal{H}_{\mathcal{A},\rho}$ besides the subdivision-based algorithm. Starting at a lattice point $b \in M$, one can construct the matrix by only adding the rows corresponding to the columns that have a nonzero entry in a previously considered row. This is a greedy way of understanding these matrices: their construction only considers the strictly necessary rows and columns given the mixed subdivision $S(\rho)$.

\medskip
This paper aims to give a family of lifting functions $\rho$ that correspond to mixed subdivisions for which:
\begin{itemize}
    \item[i)] the proof of the formula in \cite{dandrea2020cannyemiris} holds, and
    \item[ii)] the size of the matrices is reduced, since this determines complexity.
\end{itemize} 
The family that we propose is associated with a vector $v \in \Hom(M_{\mathbb{R}},\mathbb{R})$ outside the hyperplane arrangement associated with the polytope $\Delta$. We extend the proof that verifies that these matrices satisfy the conditions in \cite{dandrea2020cannyemiris} by giving a self-contained result in tropical geometry on refinement of mixed subdivisions; see Theorem \ref{tropicalrefinement}.
\medskip

We expect this family of lifting functions to reduce the size of the Canny-Emiris matrices obtained by the greedy algorithm for a general sparse system. We measure this reduction only for the case where the Newton polytopes are zonotopes generated by $n$ independent line segments. Namely, we simplify the computations to the case where the supports $\mathcal{A}_0,\dots,\mathcal{A}_n$ are:
\[ 
\mathcal{A}_i = \big\{(b_j)_{j = 1,\dots,n} \in \mathbb{Z}^n \quad | \quad 0 \leq b_j \leq a_{ij} \big\}, \quad i = 0,\dots,n ,
\]
assuming that $0 < a_{0j} \leq \dots \leq a_{n-1j}$ for all $j = 1,\dots,n$. The main results in \cite{issacversion} is Theorem \ref{maintheorem} and show that the greedy algorithm will end by reaching only those lattice points with type vector $t_{b}$ satisfying:
\[ \sum_{i = 0}^I t_{b,i} \leq I + 1, \quad \forall I < n.\]

To find all the lattice points in cells with a given type vector, we introduce the type functions:
\[ 
\varphi_b: \{1,\dots,n\} \xrightarrow[]{} \{0,\dots,n\}, \quad t_{b,i} = |\varphi_b^{-1}(i)|.
\]
These combinatorial objects contain all the information of the cells of the mixed subdivision and can help us construct the matrices. In Corollary \ref{bigconclusion}, we give a combinatorial measure of the number of rows of the matrix $\mathcal{H}_{\mathcal{G}}$ given by the greedy algorithm  as:
\[ \sum_{\varphi_b: \{1,\dots,n\} \xrightarrow[]{} \{0,\dots,n\}}\prod_{j = 1}^na_{\varphi_b(j)j}\]
where $\varphi_b$ satisfies $|\varphi^{-1}_b(\{0,\dots,I\})| \leq I + 1$. This result is not optimal amongst all the possible mixed subdivsions of an $n$-zonotope system, but it might be so amongst the ones given by affine lifting functions; see Examples \ref{smallexample}, \ref{exampledrop}, \ref{bigexample}.

\medskip
In \cite{issacversion}, we also showed that some multihomogeneous resultant matrices can be seen as an instance of the previous case by embedding their Newton polytopes and the mixed subdivisions of their Minkowski sum into an $n$-zonotope. Despite the existence of many exact determinantal formulas for some of these cases; see \cite{bender2021koszultype,mixedgrobnerbasistowards,cannyemirisincremental,emimantzdeterminant,sturmfelszelevinsky}, we expect our approach to have an easier generalization to general sparse systems through the use of the type functions and the underlying combinatorics. On the other, we beleive that using non-affine lifting functions, one could possibly find the minimal matrices from one which one can have the Canny-Emiris formula for the sparse resultant. In Conjecture \ref{conjecture}, we state a conjecture on the greedy subsets involved in such minimal construction for multi-homogeneous systems.
\medskip

\color{black}
We motivate the practical use of resultant matrices by showing its applications in computer vision and computer-aided geometric design. Namely, we show how resultant matrices apply in the solution of the $5$-points problem \cite{Bhayani2019ASR} and to the surface implicitization problem \cite{clementlaurent,LAURENT201414,kalinkaetal}. In both cases, the goal of reducing the size of the matrices appears to be one of the main lines for possible improvement. We add examples of these formulations in the \href{https://github.com/carleschecanualart/CannyEmiris}{\color{blue}JULIA implementation \color{gray}} of the treated cases ($n$-zonotopes as Newton polytopes, and multihomogeneous systems). More than improving the existing formulas (which give, in general, smaller Sylvester matrices), the goal of this implementation is to introduce the type functions, which are instrumental in the construction of the resultant matrices.

\medskip
This paper is based on the results of the authors' paper~\cite{issacversion}. Compared to that version, here we provide a new family of examples, a conjecture on the minimal size of resultant matrices, as well as two applications of resultant formulas in computer vision and in implicitization. We also improve the method of proof of Theorem~\ref{admissiblechain} through a different approach which uses tropical geometry.
\medskip

This paper is structured as follows.
In Section \ref{section:1}, we summarize the proof of the Canny-Emiris formula in \cite{dandrea2020cannyemiris} and we explain the greedy approach of \cite{cannypedersen}. In Section \ref{section:2}, a concrete family of mixed subdivisions is given by considering a hyperplane arrangement associated to the Newton polytopes, thus formalizing an important aspect of the combinatorial construction. It is then proven that the Canny-Emiris formula holds when the construction uses this arrangement. \color{black} In Section~\ref{tropicalrefinement}, we prove a result on tropical geometry, providing an alternative proof of Theorem \ref{admissiblechain}. Section~\ref{section:3} is devoted to combinatorially finding the size of the Canny-Emiris matrices when the Newton polytopes are zonotopes generated by $n$ line segments. \color{black} In Section~\ref{sec::4}, we conjecture that the liftings providing minimal size of the matrices are related to the degree reverse lexicographical monomial order. In Section \ref{sec::5}, we have added the examples of application of the Canny-Emiris formula in the $5$-point problem and in surface implicitization. 

\subsection{The Canny-Emiris formula}

Let $M$ be a lattice of rank $n$ and $M_{\mathbb{R}} = M \otimes \mathbb{R}$ the corresponding real vector space. Let $N = \Hom(M,\mathbb{Z})$ be its dual and $\mathbb{T}_N = N \otimes \mathbb{C}^{\times}$ the underlying torus. Let $\mathcal{A}_0,\dots,\mathcal{A}_n \subset M$ be a family of supports corresponding to the polynomials:
\[ 
F_i = \sum_{a \in \mathcal{A}_i}u_{i,a}\chi^a \in \mathbb{Z}[u_{i,a}][M], \quad a \in \mathcal{A}_i \quad i = 0,\dots,n,
\]
where $\chi^a$ are the characters in $\mathbb{T}_N$ of the lattice points $a \in \mathcal{A}_i$. Let $\Delta_i = \conv(\mathcal{A}_i) \subset M_{\mathbb{R}}$ for $i = 0,\dots,n$ be the convex hulls of the supports, also known as Newton polytopes, and $\Delta$ their Minkowski sum in $M_{ \mathbb{R}}$. \\

The incidence variety $Z(\mathbf{F})$ is defined as the zero set in $\mathbb{T}_N \times \prod_{i = 0}^n\mathbb{P}^{\mathcal{A}_i}$ of the polynomials $\mathbf{F} = (F_0,\dots,F_n)$. Denote by $\pi: \mathbb{T}_N \times \prod_{i = 0}^n\mathbb{P}^{\mathcal{A}_i} \xrightarrow[]{} \prod_{i = 0}^n\mathbb{P}^{\mathcal{A}_i}$ the projection onto the second factor and let $\pi_*(Z(\mathbf{F}))$ be the direct image of the zero set of $F_0,\dots,F_n$.

\begin{definition}
The sparse resultant, denoted as $\Res_{\mathcal{A}}$, is any primitive polynomial in $\mathbb{Z}[u_{i,a}]$ defining the direct image $\pi_{*}(Z(\mathbf{F}))$. 
\end{definition}

There are some lattice operations that can help us simplify the computation of these objects.

\begin{lemma}
\cite[Proposition 3.2]{dandrea2020cannyemiris}  Let $\phi: M \xrightarrow[]{} M'$ be a monomorphism of lattices of rank $n$. Then, $\Res_{\phi(\mathcal{A})} = \Res_{\mathcal{A}}^{[M': \phi(M)]}$. 
\label{lemmamonomrphism}
\end{lemma}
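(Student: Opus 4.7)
The plan is to exploit the finite covering of tori induced by $\phi$ and transfer it to a cycle-theoretic statement about incidence varieties. Since $\phi$ is injective, it restricts to a bijection $\mathcal{A}_i\cong\phi(\mathcal{A}_i)$ for every $i$, so the coefficient spaces $\prod_i\mathbb{P}^{\mathcal{A}_i}$ and $\prod_i\mathbb{P}^{\phi(\mathcal{A}_i)}$ are canonically identified and both resultants may be viewed as primitive polynomials in the same ring $\mathbb{Z}[u_{i,a}]$.

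Set $d=[M':\phi(M)]$. Dualizing the inclusion $\phi(M)\hookrightarrow M'$ and identifying $\Hom(\phi(M),\mathbb{Z})$ with $N$ via $\phi$, one obtains a sublattice $N'\subset N$ of index $d$, and hence a finite surjective homomorphism of tori $\psi\colon\mathbb{T}_{N'}\to\mathbb{T}_N$ of degree $d$. Pullback along $\psi$ sends the character $\chi^a$ to $\chi^{\phi(a)}$, so the polynomials defining the incidence variety $Z(\mathbf{F})_{\phi(\mathcal{A})}\subset\mathbb{T}_{N'}\times\prod_i\mathbb{P}^{\mathcal{A}_i}$ are exactly the $\psi$-pullbacks of those defining $Z(\mathbf{F})_\mathcal{A}\subset\mathbb{T}_N\times\prod_i\mathbb{P}^{\mathcal{A}_i}$. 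Therefore $Z(\mathbf{F})_{\phi(\mathcal{A})}=(\psi\times\id)^{-1}Z(\mathbf{F})_\mathcal{A}$, and the induced map between the two incidence varieties is finite of degree $d$.

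Projecting to $\prod_i\mathbb{P}^{\mathcal{A}_i}$ via $\pi$, which coincides on both sides after the identification above, the projection formula together with the degree computation yields the cycle-theoretic identity
\[\pi_*\bigl(Z(\mathbf{F})_{\phi(\mathcal{A})}\bigr)=d\cdot\pi_*\bigl(Z(\mathbf{F})_\mathcal{A}\bigr).\]
If the right-hand side is zero (not a hypersurface), both resultants equal $1$ by convention and the claim is trivial. Otherwise, writing $\pi_*(Z(\mathbf{F})_\mathcal{A})=m\cdot H$ for an irreducible hypersurface $H$ with primitive defining polynomial $f$, we get $\Res_\mathcal{A}=\pm f^m$, and the primitive polynomial cutting out $md\cdot H$ is $\pm f^{md}=\pm\Res_\mathcal{A}^d$; this equals $\Res_{\phi(\mathcal{A})}$ up to the sign dictated by the chosen normalization.

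The main technical obstacle is the scheme-theoretic control in the second step: one must verify that $(\psi\times\id)^{-1}Z(\mathbf{F})_\mathcal{A}$ coincides with $Z(\mathbf{F})_{\phi(\mathcal{A})}$ as schemes with multiplicity one along the dominant component, so that the pushforward in the third step picks up the factor $d$ exactly, rather than some divisor or multiple thereof. This reduces to the fact that $\psi$ is étale, being a morphism of tori whose kernel is a finite diagonalizable group scheme; hence pullback preserves the cycle-theoretic multiplicities computed at the generic point, which is all that matters for the primitive polynomial defining the direct image.
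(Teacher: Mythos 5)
Your argument is correct: identifying the coefficient spaces via the bijections $\mathcal{A}_i\cong\phi(\mathcal{A}_i)$, realizing $\phi$ dually as an index-$d$ sublattice $N'\subset N$ and hence a degree-$d$ finite (étale, in characteristic zero) covering of tori under which the second incidence variety is the pullback of the first, and then pushing forward cycles to get $\pi_*\bigl(Z(\mathbf{F})_{\phi(\mathcal{A})}\bigr)=d\cdot\pi_*\bigl(Z(\mathbf{F})_{\mathcal{A}}\bigr)$ is exactly the standard route. The paper itself offers no proof of this lemma, citing \cite[Prop.\ 3.2]{dandrea2020cannyemiris}, and your reconstruction is essentially the argument given there, so there is nothing to flag beyond the routine check (which you note) that the reduced preimage carries multiplicity one.
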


\begin{remark}
\label{zero}
Moreover, the sparse resultant is invariant under translations. Therefore, we can always assume $0 \in \mathcal{A}_i$ for all $i = 0,\dots,n$.
\end{remark}

\begin{definition}
\label{infconvolution}
A mixed subdivision of $\Delta$ is a decomposition of this polytope in a union of cells $\Delta = \cup D$ such that:
\begin{itemize}
    \item[i)] the intersection of two cells is either a cell or empty,
    \item[ii)] every face of a cell is also a cell of the subdivision and,
    \item[iii)]  every cell $D$ has a component structure $D = D_0 + \dots + D_n$ where $D_i$ is a cell of the subdivision in $\Delta_i$.
\end{itemize}

\medskip
The usual way to construct mixed subdivisions is by considering piecewise affine convex lifting functions $\rho_i: \Delta_i \xrightarrow[]{} \mathbb{R}$ as explained in \cite{gkz1994}. A global lifting function $\rho: \Delta \xrightarrow[]{} \mathbb{R}$ is obtained after taking the inf-convolution of the previous functions, as explained in \cite[Sec. 2]{dandrea2020cannyemiris}.
\end{definition}

\begin{definition}
A mixed subdivision of $\Delta$ is tight if, for every $n$-cell $D$, its components satisfy:
\[ \sum_{i=0}^n\dim D_i = n.\]
\end{definition}
In the case of $n + 1$ polynomials and $n$ variables, this property guarantees that every $n$-cell has a component that is $0$-dimensional. The cells that have a single $0$-dimensional component are called mixed ($i$-mixed if it is the $i$-th component). The rest of the cells are called non-mixed. \\

Let $\delta$ be a generic vector such that the lattice points in the interior of $\Delta + \delta$ lie in $n$-cells. Then, consider:
 \[ \mathcal{B} = (\Delta + \delta) \cap M.\]
Each element $b \in \mathcal{B}$ lies in one of these translated cells $D + \delta$ and let $D_i$ be the components of this cell. As the subdivision is tight, there is at least one $i$ such that $\dim D_i = 0$.\\

Following the language of \cite{michielscools}, we call $t_b = (t_{b,0},\dots,t_{b,n})$ the type vector associated with $b$, defined as $t_{b,i} = \dim D_i$ for $b \in D + \delta$.

 \begin{definition}
 The row content is a function 
 \[ 
 \rc: \mathcal{B} \xrightarrow[]{} \cup_{i = 0}^n\{i\} \times \mathcal{A}_i \] where, for $b \in \mathcal{B}$ lying in an  $n$-cell $D$, $\rc(b)$ is a pair $(i(b),a(b))$ with $i(b) = \max\{i \in \{0,\dots,n\} \: | \: t_{b,i} = 0\}$ and $a(b) = D_{i(b)}$.
 \end{definition}
This provides a partition of $\mathcal{B}$ into subsets:
 \[ \mathcal{B}_i = \{b \in \mathcal{B} \quad | \quad i(b) = i\}.
 \]
 Finally, we construct the Canny-Emiris matrices $\mathcal{H}_{\mathcal{A},\rho}$ whose rows correspond to the coefficients of the polynomials $\chi^{b - a(b)}F_{i(b)}$ for each of the $b \in \mathcal{B}$. In particular, the entry corresponding to a pair $b,b' \in \mathcal{B}$ is:
\[\mathcal{H}_{\mathcal{A},\rho}[b,b'] =  \begin{cases} 
      u_{i(b),b'-b+a(b)} & b' - b + a(b) \in \mathcal{A}_i \\
      0 & \text{otherwise}
   \end{cases}
\]

\begin{remark}
Each entry contains, at most, a single coefficient $u_{i,a}$. In particular, the row content allows us to choose a maximal submatrix of $\mathcal{H}_{\mathcal{A},\rho}$ from the matrix of a map sending a tuple of polynomials $(G_0,\dots,G_n)$ to $G_0F_0 + \dots + G_nF_n$. 
\end{remark}

Let $\mathcal{C} \subset \mathcal{B}$ be a subset of the supports in translated cells. The matrix $\mathcal{H}_{\mathcal{A},\rho,\mathcal{C}}$ is defined by considering the submatrix of the corresponding rows and columns associated with elements in $\mathcal{C}$. In particular, we look at the set of lattice points lying in translated non-mixed cells and consider:
\[ \mathcal{B}^{\circ} = \{b \in \mathcal{B} \: | \:  b \text{ lies in a translated non-mixed cell}\}.\]
With this, we form the principal submatrix:
\[\mathcal{E}_{\mathcal{A},\rho} = \mathcal{H}_{\mathcal{A},\rho,\mathcal{B}^{\circ}}\]
The Canny-Emiris conjecture states that the sparse resultant is the quotient of the determinants of these two matrices:
\[\Res_{\mathcal{A}} = \frac{\det(\mathcal{H}_{\mathcal{A},\rho})}{\det(\mathcal{E}_{\mathcal{A},\rho})}.\]
This result was conjectured by Canny and Emiris and proved by D'Andrea, Jerónimo, and Sombra under the restriction that the mixed subdivision $S(\rho)$ given by the lifting $\rho$ satisfies a certain condition, given on a chain of mixed subdivisions.

\begin{definition}
Let $S(\phi), S(\psi)$ be two mixed subdivisions of $\Delta = \sum_{n = 0}^n\Delta_i$. We say that $S(\psi)$ refines $S(\phi)$ and write $S(\phi) \preceq S(\psi)$ if for every cell $C \in S(\psi)$ there is a cell $D \in S(\phi)$ such that $C \subset D$. An incremental chain of mixed subdivisions $S(\theta_0) \preceq \dots \preceq S(\theta_n)$ is a chain of mixed subdivisions of $\Delta$ refining each other.

\end{definition}
\begin{remark}
\label{remarkzerolifting}
In \cite[Definition 2.4]{dandrea2020cannyemiris}, a common lifting function $\omega \in \prod_{i = 0}^n\mathbb{R}^{\mathcal{A}_i}$ is considered and the $S(\theta_i)$ are given by the lifting functions $\omega^{<i} = (\omega_0,\dots,\omega_{i-1},0)$ as long as $S(\theta_i) \preceq S(\theta_{i+1})$. The last zero represents the lifting on $(\Delta_i,\dots,\Delta_n)$. The resulting mixed subdivision is the same as if we considered the zero lifting in $\sum_{j = i}^n\Delta_j$.
\end{remark}
\begin{definition}
The mixed volume of $n$ polytopes $P_1,\dots,P_n \subset M_{\mathbb{R}}$, denoted as $\MV_{M}(P_1,\dots,P_n)$, is the coefficient of $\prod_{i = 1}^n\lambda_i$ in:
\[ \Vol_n(\lambda_1 P_1 + \dots + \lambda_nP_n)\]
which is a polynomial in $\lambda_1,\dots,\lambda_n$ \cite[Theorem 6.7]{coxlitosh}.
\end{definition}
\begin{proposition} \cite[Theorem 3.4]{emirisrege}
\label{mixedvolume1}
Let $S(\rho)$ be a tight mixed subdivision of $\Delta = (\Delta_0,\dots,\Delta_n)$. For $i = 0,\dots,n$, the mixed volume of all the polytopes except $\Delta_i$ equals the volume of the $i$-mixed cells.
\[\MV(\Delta_0,\dots,\Delta_{i-1},\Delta_{i+1},\dots,\Delta_n) = \sum_{D \: i\text{-mixed}}\Vol_nD\]
\end{proposition}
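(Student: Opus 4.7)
The plan is to extract the mixed volume as a coefficient of a volume polynomial, expand that polynomial using the tight mixed subdivision, and then identify which cells contribute to the desired coefficient.

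First I would argue that a tight mixed subdivision $S(\rho)$ of $(\Delta_0,\dots,\Delta_n)$ gives rise, for every $(\lambda_0,\dots,\lambda_n) \in \mathbb{R}_{\geq 0}^{n+1}$, to a mixed subdivision of the scaled Minkowski sum $\lambda_0 \Delta_0 + \dots + \lambda_n \Delta_n$ whose cells are precisely the $\lambda_0 D_0 + \dots + \lambda_n D_n$ with $D = D_0 + \dots + D_n$ ranging over cells of $S(\rho)$. This is standard: the lifting function $\rho$ extends to $\sum_i \lambda_i \Delta_i$ and produces the scaled subdivision, so the combinatorics is preserved. I would cite or recall this from the mixed-subdivision framework of Def. \ref{infconvolution}.

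Next I would compute the volume of each scaled $n$-cell. If $D = D_0 + \dots + D_n$ is a tight $n$-cell (so $\sum_j \dim D_j = n$) and $D$ is $n$-dimensional, then the affine spans of the $D_j$ are in direct sum, and a change of coordinates shows that
\[
\Vol_n\bigl(\lambda_0 D_0 + \dots + \lambda_n D_n\bigr) = \Bigl(\prod_{j=0}^n \lambda_j^{\dim D_j}\Bigr)\, \Vol_n(D).
\]
Summing over all $n$-cells of $S(\rho)$ gives the polynomial identity
\[
\Vol_n\Bigl(\sum_{j=0}^n \lambda_j \Delta_j\Bigr) = \sum_{D} \Bigl(\prod_{j=0}^n \lambda_j^{\dim D_j}\Bigr) \Vol_n(D).
\]

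Finally I would extract the mixed volume. By definition $\MV(\Delta_0,\dots,\widehat{\Delta_i},\dots,\Delta_n)$ is the coefficient of $\prod_{j \neq i} \lambda_j$ in $\Vol_n(\sum_{j \neq i} \lambda_j \Delta_j)$, which is obtained by setting $\lambda_i = 0$ in the expansion above and reading off the coefficient of $\prod_{j \neq i}\lambda_j$. Setting $\lambda_i = 0$ kills every term with $\dim D_i > 0$, leaving only cells with $\dim D_i = 0$. Among these, extracting the coefficient of $\prod_{j \neq i} \lambda_j$ forces $\dim D_j = 1$ for every $j \neq i$, which together with $\dim D_i = 0$ and the tightness condition $\sum_j \dim D_j = n$ is precisely the definition of an $i$-mixed cell. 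The sum of their volumes is therefore the claimed coefficient.

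The only real obstacle is the factorization $\Vol_n(\lambda_0 D_0 + \dots + \lambda_n D_n) = \bigl(\prod_j \lambda_j^{\dim D_j}\bigr)\Vol_n(D)$: one must genuinely use tightness to know that the $D_j$ span affinely independent directions, so that the scaling acts independently in each block of coordinates. Once this is in place, the rest is a bookkeeping exercise about which monomial corresponds to which type vector.
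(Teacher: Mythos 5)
Your argument is correct, but note that the paper itself gives no proof of this proposition: it is quoted directly from \cite{emirisrege} (Thm.\ 3.4), so there is no internal argument to compare against. Your scaling-and-coefficient-extraction route is essentially the standard proof from that literature (and from \cite{coxlitosh}): a coherent tight mixed subdivision induced by $\rho$ scales to a subdivision of $\sum_j \lambda_j\Delta_j$ with cells $\sum_j\lambda_j D_j$, the volume of each tight cell factors as $\prod_j\lambda_j^{\dim D_j}\Vol_n(D)$ because $\dim D=\sum_j\dim D_j=n$ forces the direction spaces of the $D_j$ into direct sum, and reading off the coefficient of $\prod_{j\neq i}\lambda_j$ after discarding terms with $\dim D_i>0$ leaves exactly the $i$-mixed cells (in a tight subdivision, $\dim D_i=0$ and $\dim D_j=1$ for $j\neq i$ is equivalent to being $i$-mixed). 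Two small points to tighten: the induced subdivision statement is cleanest for strictly positive $\lambda$, and the degenerate case $\lambda_i=0$ should be handled by observing that both sides of your identity are polynomials agreeing on the open orthant, so the substitution $\lambda_i=0$ is a formal operation on polynomials; and you should say explicitly that the component decomposition $D=D_0+\dots+D_n$ of a cell is part of the data of the mixed subdivision (Def.\ \ref{infconvolution}), so the monomial attached to each cell is well defined. With these remarks your proof is complete and matches the cited source's approach.
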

In particular, $\MV(\Delta_0,\dots,\Delta_{i-1},\Delta_{i+1},\dots,\Delta_n)$ equals the degree of the sparse resultant in the coefficients of $F_i$; see \cite[Chapter 7, Theorem 6.3]{coxlitosh}. Each of the rows of $\mathcal{H}_{\mathcal{A},\rho}$ will correspond to a lattice point $b$ and each entry on that row will have degree $1$ with respect to the coefficients of $F_{i(b)}$ and zero with respect to the coefficients of the rest of polynomials. Therefore, if we add the lattice points in $i$-mixed cells, the degree of $\mathcal{H}_{\mathcal{A},\rho}$ with respect to the coefficients of $F_i$ will be at least the degree of the resultant with respect to the same coefficients. 
\begin{definition}\cite[Definition 3.4]{dandrea2020cannyemiris}
The fundamental subfamily of $\mathcal{A}$ is the minimal family of supports $\mathcal{A}_I = (\mathcal{A}_i)_{i \in I}$ such that the resultant has positive degree with respect to the coefficients of $F_i$ for $i \in I$. This definition can be given in other equivalent terms as shown in \cite[Corollary 1.1]{sturmfels94}.
\end{definition}
\begin{remark}
\label{remarkresult}
Using Proposition \ref{mixedvolume1}, we can see that if the fundamental subfamily is empty, then the resultant is equal to $1$ while if the fundamental subfamily is $\{i\}$ then $\mathcal{A}_i$ is given by a single point $\{a\}$ and the resultant is $u_{i,a}^{m_i}$ for $m_i = \MV(\Delta_0,\dots,\Delta_{i-1},\Delta_{i+1},\dots,\Delta_n)$. The Canny-Emiris formula holds naturally \cite[Proposition 4.26]{dandrea2020cannyemiris} in both cases. \end{remark}
\begin{definition}
\label{admissible}
An incremental chain $S(\theta_0) \preceq \dots \preceq S(\theta_n)$ is admissible if for each $i = 0,\dots,n$, each $n$-cell $D$ of the subdivision $S(\theta_i)$ satisfies either of the following two conditions
\begin{itemize}
    \item[i)] the fundamental subfamily of $\mathcal{A}_D$ contains at most one support or
    \item[ii)] $\mathcal{B}_{D,i}$ is contained in the union of the translated $i$-mixed cells of $S(\rho_D)$.
\end{itemize}
A mixed subdivision $S(\rho)$ is called admissible if it admits an admissible incremental chain $S(\theta_0) \preceq \dots \preceq S(\theta_n) \preceq S(\rho)$ refining it.
\end{definition}

With all these properties, together with the use of the product formulas, one can reproduce the proof of the Canny-Emiris formula given in \cite[Theorem 4.27]{dandrea2020cannyemiris} under the conditions of admissibility in $S(\rho)$; see also \cite{poissondansom,sturmfels94}.

\subsection{The greedy algorithm}

Using the previous notation, we state the greedy algorithm in \cite{cannypedersen} for the construction of the matrix. Let $b \in \mathcal{B}$ be a lattice point in a translated cell. The first step of the algorithm is to add the row of the matrix corresponding to $b$, and then continue by considering the lattice points corresponding to the columns that have a nonzero entry in this row. These lattice points are:
\[ b - a(b) + \mathcal{A}_{i(b)}.\]
All these lattice points will have to be added as rows of the matrix. If we add the lattice point $b'$ at some point of the algorithm after having added another lattice point $b$, we say that we \textit{reach} $b'$ from $b$. The algorithm terminates when there are no more lattice points to add and it might give a square matrix $\mathcal{H}_{\mathcal{G}}$ which has less rows and columns than $\mathcal{H}_{\mathcal{A},\rho}$, which was  constructed using all the lattice points in $\mathcal{B}$. The rows and columns associated to lattice points in non-mixed cells also provide a minor $\mathcal{E}_{\mathcal{G}}$ of $\mathcal{H}_{\mathcal{G}}$.
\medskip

It was not proved by Canny and Pedersen whether this approach would always include all the lattice points in mixed cells as rows of the matrix, independently of the starting point. As these points are necessary to achieve the degree of the resultant, we consider them to be the starting points of the algorithm.
\begin{remark}
\label{diagonal}
We know that the entry corresponding to the diagonal of the matrix $\mathcal{H}_{\mathcal{A},\rho,\mathcal{C}}$ will be $\prod_{b \in \mathcal{C}}u_{i(b),a(b)}$ for any subset $\mathcal{C} \subset \mathcal{B}$. This term can be used in order to deduce that these matrices have non-zero determinant; see \cite[Proposition 4.13]{dandrea2020cannyemiris}.
\end{remark}
\begin{theorem}
If the Canny-Emiris formula holds for a mixed subdivision $S(\rho)$ and the greedy algorithm provides matrices $\mathcal{H}_{\mathcal{G}}$ and $\mathcal{E}_{\mathcal{G}}$ by starting at the lattice points in mixed cells, then:
\[ \Res_{\mathcal{A}} = \frac{ \det(\mathcal{H}_{\mathcal{G}})}{\det(\mathcal{E}_{\mathcal{G}})}.\]
\end{theorem}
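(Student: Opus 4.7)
The plan is to exhibit a block triangular decomposition of both $\mathcal{H}_{\mathcal{A},\rho}$ and $\mathcal{E}_{\mathcal{A},\rho}$ in which the greedy matrices $\mathcal{H}_{\mathcal{G}}$ and $\mathcal{E}_{\mathcal{G}}$ appear as one diagonal block and a common matrix appears as the other, so that this common factor cancels in the ratio of determinants. Let $\mathcal{B}_{\mathcal{G}} \subset \mathcal{B}$ denote the set of lattice points produced by the greedy algorithm when it is started from all lattice points in translated mixed cells, and write $\mathcal{B}_{\text{un}} = \mathcal{B} \setminus \mathcal{B}_{\mathcal{G}}$ for the unreached points.

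First I would check $\mathcal{B}_{\text{un}} \subset \mathcal{B}^{\circ}$. Since every lattice point in a translated mixed cell is taken as a starting seed of the algorithm, it belongs to $\mathcal{B}_{\mathcal{G}}$; hence all unreached points lie in translated non-mixed cells and so in $\mathcal{B}^{\circ}$. Next I would establish the key closure property: if $b \in \mathcal{B}_{\mathcal{G}}$ and $b' \in \mathcal{B}_{\text{un}}$, then $\mathcal{H}_{\mathcal{A},\rho}[b,b'] = 0$. Indeed, a nonzero entry in row $b$ at column $b'$ requires $b' \in b - a(b) + \mathcal{A}_{i(b)}$, and by the very definition of the greedy procedure every such $b'$ is reached from $b$, hence lies in $\mathcal{B}_{\mathcal{G}}$, contradicting $b' \in \mathcal{B}_{\text{un}}$.

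Order the rows and columns of $\mathcal{H}_{\mathcal{A},\rho}$ by listing the points of $\mathcal{B}_{\mathcal{G}}$ first and then the points of $\mathcal{B}_{\text{un}}$. The closure property above means the upper-right block vanishes, so
\[ \mathcal{H}_{\mathcal{A},\rho} = \begin{pmatrix} \mathcal{H}_{\mathcal{G}} & 0 \\ * & \mathcal{H}_{\mathcal{A},\rho,\mathcal{B}_{\text{un}}} \end{pmatrix}, \]
with the upper-left block being exactly the matrix built by the greedy algorithm. Applying the same ordering to the points of $\mathcal{B}^{\circ} = \mathcal{B}^{\circ}_{\mathcal{G}} \sqcup \mathcal{B}_{\text{un}}$ (using the inclusion above) gives
\[ \mathcal{E}_{\mathcal{A},\rho} = \begin{pmatrix} \mathcal{E}_{\mathcal{G}} & 0 \\ * & \mathcal{H}_{\mathcal{A},\rho,\mathcal{B}_{\text{un}}} \end{pmatrix}, \]
where the lower-right block coincides with the one above because it is the submatrix indexed by the same set $\mathcal{B}_{\text{un}}$, with entries read off from the same formula for $\mathcal{H}_{\mathcal{A},\rho}$.

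Taking determinants in each block decomposition yields
\[ \det(\mathcal{H}_{\mathcal{A},\rho}) = \det(\mathcal{H}_{\mathcal{G}})\cdot \det(\mathcal{H}_{\mathcal{A},\rho,\mathcal{B}_{\text{un}}}), \qquad \det(\mathcal{E}_{\mathcal{A},\rho}) = \det(\mathcal{E}_{\mathcal{G}})\cdot \det(\mathcal{H}_{\mathcal{A},\rho,\mathcal{B}_{\text{un}}}). \]
By Rem. \ref{diagonal}, $\det(\mathcal{H}_{\mathcal{A},\rho,\mathcal{B}_{\text{un}}})$ contains the nonzero diagonal monomial $\prod_{b \in \mathcal{B}_{\text{un}}} u_{i(b),a(b)}$ and is in particular nonzero, so we may divide the two identities and invoke the assumed Canny-Emiris formula for $S(\rho)$ to conclude. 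The only delicate point is the combinatorial closure step showing that the greedy algorithm never hits a zero outside $\mathcal{B}_{\mathcal{G}}$; once this is isolated, the rest reduces to bookkeeping about block triangular determinants.
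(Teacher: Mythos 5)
Your proposal is correct and follows essentially the same route as the paper's own proof: the same block-triangular decomposition of $\mathcal{H}_{\mathcal{A},\rho}$ and $\mathcal{E}_{\mathcal{A},\rho}$ indexed by reached versus unreached points, with the zero block justified by the greedy closure property and the common lower-right block cancelling in the ratio. Your explicit check that the unreached points are non-mixed and the appeal to the diagonal monomial for nonvanishing of the common block are just slightly more careful statements of steps the paper treats implicitly.
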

\begin{proof}
In general, there is a subset $\mathcal{G} \subset \mathcal{B}$ corresponding to the rows and columns of $\mathcal{H}_{\mathcal{G}}$. We are assuming that $\mathcal{G}$ contains all the lattice points in translated mixed cells. Let $\mathcal{H}_{\mathcal{A},\rho}$ be the matrix containing all lattice points in translated cells of $S(\rho)$. Without loss of generality, we can assume that the matrix takes the following form:
\[
\mathcal{H}_{\mathcal{A},\rho} = \begin{pmatrix}
  \mathcal{H}_{\mathcal{G}} & 0 \\
  \bullet & \mathcal{H}_{\mathcal{B} - \mathcal{G}}
\end{pmatrix}
\]
where $\mathcal{H}_{\mathcal{G}}$ is the minor corresponding to the lattice points in $\mathcal{G}$ and $\mathcal{H}_{\mathcal{B} - \mathcal{G}}$ is the minor corresponding to the lattice points not in $\mathcal{G}$. The zeros appear due to the fact that there is no pair $b \notin \mathcal{G}$, $b' \in \mathcal{G}$ such that $b \in b' - a(b') + \mathcal{A}_{i(b)}$. The same block-triangular structure also appears in the principal submatrix $\mathcal{E}_{\mathcal{A},\rho}$ and all the lattice points that are not in $\mathcal{G}$ must be non-mixed, implying that $\mathcal{E}_{\mathcal{B} - \mathcal{G}} = \mathcal{H}_{\mathcal{B} - \mathcal{G}}$. 
\medskip

Finally, using the fact that the determinant of a block-triangular matrix is the product of the determinants of the diagonal blocks, we can prove the resultant formula:
\[ \Res_{\mathcal{A}} = \frac{\det(\mathcal{H}_{\mathcal{A},\rho})}{\det(\mathcal{E}_{\mathcal{A},\rho})} = \frac{\det(\mathcal{H}_{\mathcal{G}})·\det(\mathcal{H}_{\mathcal{B}-\mathcal{G}})}{\det(\mathcal{E}_{\mathcal{G}})·\det(\mathcal{H}_{\mathcal{B}-\mathcal{G}})} = \frac{\det(\mathcal{H}_{\mathcal{G}})}{\det(\mathcal{E}_{\mathcal{G}})}.\]
\end{proof}
\begin{example}
\label{smallexample}
Let $f_0,f_1,f_2$ be three bilinear equations corresponding to the supports $\mathcal{A}_0 =  \mathcal{A}_1 =  \mathcal{A}_2 = \{(0,0),(1,0),(0,1),(1,1)\}$. A possible mixed subdivision $S(\rho)$ is the following:
\[\begin{tikzpicture}

\draw[brown] (1/2,1/2) -- (1,1/2);
\draw[brown] (1/2,1/2) -- (1/2,1);
\draw[brown] (1,1/2) -- (1,1);
\draw[brown] (1/2,1) -- (1,1);
\draw[brown] (1/2,0) -- (1,0);
\draw[brown] (0,1/2) -- (0,1);
\draw[brown] (1/2,3/2) -- (1,3/2);
\draw[brown] (3/2,1/2) -- (3/2,1);

\draw[blue] (0,0) -- (1/2,0);
\draw[blue] (0,0) -- (0,1/2);
\draw[blue] (1/2,0) -- (1/2,1/2);
\draw[blue] (0,1/2) -- (1/2,1/2);
\draw[blue] (1,1/2) -- (1,0);
\draw[blue] (1/2,1) -- (0,1);
\draw[blue] (3/2,1/2) -- (3/2,0);
\draw[blue] (1/2,3/2) -- (0,3/2);

\draw[green] (1,1) -- (3/2,1);
\draw[green] (1,1) -- (1,3/2);
\draw[green] (3/2,3/2) -- (3/2,1);
\draw[green] (3/2,3/2) -- (1,3/2);
\draw[green] (3/2,0) -- (1,0);
\draw[green] (0,3/2) -- (0,1);
\draw[green] (3/2,1/2) -- (1,1/2);
\draw[green] (1/2,3/2) -- (1/2,1);

\filldraw[black] (0+1/4,1/4) circle (2pt) node[anchor=east] {};
\filldraw[red] (0+1/4,1/2+1/4) circle (2pt) node[anchor=east] {};
\filldraw[red] (0+1/4,1+1/4) circle (2pt) node[anchor=east] {};
\filldraw[red] (1/2+1/4,0+1/4) circle (2pt) node[anchor=south ] {};
\filldraw[red] (1/2+1/4,1/2+1/4) circle (2pt) node[anchor= south] {};
\filldraw[red] (1/2+1/4,1+1/4) circle (2pt) node[anchor=west] {};
\filldraw[red] (1+1/4,0+1/4) circle (2pt) node[anchor=south] {};
\filldraw[red] (1+1/4,1/2+1/4) circle (2pt) node[anchor=west] {};
\filldraw[red] (1+1/4,1+1/4) circle (2pt) node[anchor=west] {};

\end{tikzpicture}\]
where the dots indicate the lattice points in translated mixed cells. The number of lattice points in translated cells is $9$. However, if we construct the matrix greedily starting from the lattice points in translated mixed cells, we have an $8 \times 8$ matrix, corresponding to the lattice points marked in red.
\end{example}

\color{black}
\begin{example}
\label{exampledrop}
Let $f_0,f_1,f_2$ be three bihomogeneous equations with supports $$\mathcal{A}_0 = \{(0,0),(1,0),(2,0),(0,1),(1,1),(2,1)\}, $$$$ \mathcal{A}_1 = \{(0,0),(1,0),(0,1),(1,1),(0,2),(1,2)\}, \: \mathcal{A}_2 = \{(0,0),(1,0),(0,1),(0,1)\}.$$The expected number of supports lying in translated cells is $16$. Let $\rho_0 = (0,3,6,3,6,9)$, $\rho_1 = (0,2,2,4,4,6)$ and $\rho_2 = (0,1,1,2)$ be the lifting functions and $\delta = (-\frac{1}{2},\frac{1}{2})$ give the following mixed subdivision:
\[\begin{tikzpicture}

\draw[brown] (1/2,1/2) -- (1/2,3/2);
\draw[brown] (1/2,1/2) -- (1,1/2);
\draw[brown] (1/2,3/2) -- (1,3/2);
\draw[brown] (1,1/2) -- (1,3/2);
\draw[brown] (0,1/2) -- (0,3/2);
\draw[brown] (2,1/2) -- (2,3/2);
\draw[brown] (1/2,0) -- (1,0);
\draw[brown] (1/2,2) -- (1,2);

\draw[blue] (0,0) -- (1/2,0);
\draw[blue] (0,0) -- (0,1/2);
\draw[blue] (1/2,0) -- (1/2,1/2);
\draw[blue] (0,1/2) -- (1/2,1/2);
\draw[blue] (1,0) -- (1,1/2);
\draw[blue] (2,0) -- (2,1/2);
\draw[blue] (0,3/2) -- (1/2,3/2);
\draw[blue] (0,2) -- (1/2,2);

\draw[green] (1,3/2) -- (2,3/2);
\draw[green] (1,3/2) -- (1,2);
\draw[green] (2,2) -- (2,3/2);
\draw[green] (2,2) -- (1,2);
\draw[green] (1,0) -- (2,0);
\draw[green] (0,3/2) -- (0,2);
\draw[green] (1,1/2) -- (2,1/2);
\draw[green] (1/2,3/2) -- (1/2,2);

\filldraw[black] (0+1/4,1/4) circle (2pt) node[anchor=east] {};
\filldraw[red] (0+1/4,1/2+1/4) circle (2pt) node[anchor=east] {};
\filldraw[red] (0+1/4,1+1/4) circle (2pt) node[anchor=east] {};
\filldraw[red] (0+1/4,3/2+1/4) circle (2pt) node[anchor=east] {};
\filldraw[red] (1/2+1/4,0+1/4) circle (2pt) node[anchor=south ] {};
\filldraw[red] (1/2+1/4,1/2+1/4) circle (2pt) node[anchor= south] {};
\filldraw[red] (1/2+1/4,1+1/4) circle (2pt) node[anchor=west] {};
\filldraw[red] (1/2+1/4,3/2+1/4) circle (2pt) node[anchor=west] {};
\filldraw[red] (1+1/4,0+1/4) circle (2pt) node[anchor=south] {};
\filldraw[red] (1+1/4,1/2+1/4) circle (2pt) node[anchor=west] {};
\filldraw[red] (1+1/4,1+1/4) circle (2pt) node[anchor=west] {};
\filldraw[red] (1+1/4,3/2+1/4) circle (2pt) node[anchor=west] {};
\filldraw[red] (3/2+1/4,0+1/4) circle (2pt) node[anchor=south] {};
\filldraw[red] (3/2+1/4,1/2+1/4) circle (2pt) node[anchor=west] {};
\filldraw[red] (3/2+1/4,1+1/4) circle (2pt) node[anchor=west] {};
\filldraw[red] (3/2+1/4,3/2+1/4) circle (2pt) node[anchor=west] {};

\end{tikzpicture}.\]

However, if we use the greedy approach, we have an $15 \times 15$ matrix, corresponding to the lattice points marked in red.


%
%

\end{example}
\begin{example}
\label{bigexample}
Let $f_0,f_1,f_2,f_3$ be four polynomials with $$\mathcal{A}_0 = \mathcal{A}_1 = \mathcal{A}_2 = \mathcal{A}_3 = \{(0,0,0),(1,0,0),(0,1,0),(0,0,1),(1,0,1),(0,1,1)\}$$ and $\rho_0 = (0,3,6,3,6,9)$, $\rho_1 = (0,2,4,2,4,6)$, $\rho_2 = (0,1,2,1,2,3)$ and $\rho_3 = (0,0,0,0,0,0)$ gives the mixed subdivision:
\[
\begin{tikzpicture}[thick,scale=5]

\coordinate (A1) at (0,0);
\coordinate (A2) at (0,0.1);
\coordinate (A3) at (0,0.2);
\coordinate (A4) at (0,0.3);
\coordinate (A5) at (0,0.4);
\coordinate (A6) at (0.1,0);
\coordinate (A7) at (0.1,0.1);
\coordinate (A8) at (0.1,0.2);
\coordinate (A9) at (0.1,0.3);
\coordinate (A10) at (0.2,0);
\coordinate (A11) at (0.2,0.1);
\coordinate (A12) at (0.2,0.2);
\coordinate (A13) at (0.3,0);
\coordinate (A14) at (0.3,0.1);
\coordinate (A15) at (0.4,0);

\coordinate (B1) at (0.2,0.2);
\coordinate (B2) at (0.2,0.3);
\coordinate (B3) at (0.2,0.4);
\coordinate (B4) at (0.2,0.5);
\coordinate (B5) at (0.2,0.6);
\coordinate (B6) at (0.3,0.2);
\coordinate (B7) at (0.3,0.3);
\coordinate (B8) at (0.3,0.4);
\coordinate (B9) at (0.3,0.5);
\coordinate (B10) at (0.4,0.2);
\coordinate (B11) at (0.4,0.3);
\coordinate (B12) at (0.4,0.4);
\coordinate (B13) at (0.5,0.2);
\coordinate (B14) at (0.5,0.3);
\coordinate (B15) at (0.6,0.2);

\coordinate (C1) at (0.4,0.4);
\coordinate (C2) at (0.4,0.5);
\coordinate (C3) at (0.4,0.6);
\coordinate (C4) at (0.4,0.7);
\coordinate (C5) at (0.4,0.8);
\coordinate (C6) at (0.5,0.4);
\coordinate (C7) at (0.5,0.5);
\coordinate (C8) at (0.5,0.6);
\coordinate (C9) at (0.5,0.7);
\coordinate (C10) at (0.6,0.4);
\coordinate (C11) at (0.6,0.5);
\coordinate (C12) at (0.6,0.6);
\coordinate (C13) at (0.7,0.4);
\coordinate (C14) at (0.7,0.5);
\coordinate (C15) at (0.8,0.4);

\coordinate (D1) at (0.6,0.6);
\coordinate (D2) at (0.6,0.7);
\coordinate (D3) at (0.6,0.8);
\coordinate (D4) at (0.6,0.9);
\coordinate (D5) at (0.6,1.0);
\coordinate (D6) at (0.7,0.6);
\coordinate (D7) at (0.7,0.7);
\coordinate (D8) at (0.7,0.8);
\coordinate (D9) at (0.7,0.9);
\coordinate (D10) at (0.8,0.6);
\coordinate (D11) at (0.8,0.7);
\coordinate (D12) at (0.8,0.8);
\coordinate (D13) at (0.9,0.6);
\coordinate (D14) at (0.9,0.7);
\coordinate (D15) at (1.0,0.6);

\coordinate (E1) at (0.8,0.8);
\coordinate (E2) at (0.8,0.9);
\coordinate (E3) at (0.8,1.0);
\coordinate (E4) at (0.8,1.1);
\coordinate (E5) at (0.8,1.2);
\coordinate (E6) at (0.9,0.8);
\coordinate (E7) at (0.9,0.9);
\coordinate (E8) at (0.9,1.0);
\coordinate (E9) at (0.9,1.1);
\coordinate (E10) at (1.0,0.8);
\coordinate (E11) at (1.0,0.9);
\coordinate (E12) at (1.0,1.0);
\coordinate (E13) at (1.1,0.8);
\coordinate (E14) at (1.1,0.9);
\coordinate (E15) at (1.2,0.8);

\begin{scope}[thick,dashed,,opacity=0.6]

\draw[blue] (B1) -- (B2) -- (B6) -- (B1);
\draw[blue] (B7) -- (B10);
\draw[blue] (B11) -- (B13);
\draw[blue] (B14) -- (B15);

\draw[brown] (B2) -- (B3) -- (B7) -- (B2);
\draw[brown] (B6) -- (B10);
\draw[brown] (B8) -- (B11);

\draw[orange] (B3) -- (B4) -- (B8) -- (B3);
\draw[orange] (B7) -- (B11);
\draw[orange] (B10) -- (B13);

\draw[green] (B4) -- (B5) -- (B9) -- (B4);
\draw[green] (B8) -- (B12);
\draw[green] (B11) -- (B14);
\draw[green] (B13) -- (B15);

\draw[blue] (C1) -- (C2) -- (C6) -- (C1);
\draw[blue] (C7) -- (C10);
\draw[blue] (C11) -- (C13);
\draw[blue] (C14) -- (C15);

\draw[brown] (C2) -- (C3) -- (C7) -- (C2);
\draw[brown] (C6) -- (C10);
\draw[brown] (C8) -- (C11);
\draw[brown] (C12) -- (C14);

\draw[orange] (C3) -- (C4) -- (C8) -- (C3);
\draw[orange] (C9) -- (C12);
\draw[orange] (C7) -- (C11);
\draw[orange] (C10) -- (C13);

\draw[green] (C4) -- (C5) -- (C9) -- (C4);
\draw[green] (C8) -- (C12);
\draw[green] (C11) -- (C14);
\draw[green] (C13) -- (C15);

\draw[blue] (D1) -- (D2) -- (D6) -- (D1);
\draw[blue] (D7) -- (D10);
\draw[blue] (D11) -- (D13);
\draw[blue] (D14) -- (D15);

\draw[brown] (D2) -- (D3) -- (D7) -- (D2);
\draw[brown] (D6) -- (D10);
\draw[brown] (D8) -- (D11);
\draw[brown] (D12) -- (D14);

\draw[orange] (D3) -- (D4) -- (D8) -- (D3);
\draw[orange] (D9) -- (D12);
\draw[orange] (D7) -- (D11);
\draw[orange] (D10) -- (D13);

\draw[green] (D4) -- (D5) -- (D9) -- (D4);
\draw[green] (D8) -- (D12);
\draw[green] (D11) -- (D14);
\draw[green] (D13) -- (D15);

\draw[blue] (E1) -- (E2) -- (E6) -- (E1);
\draw[blue] (E7) -- (E10);
\draw[blue] (E11) -- (E13);
\draw[blue] (E14) -- (E15);

\draw[brown] (E2) -- (E3) -- (E7) -- (E2);
\draw[brown] (E6) -- (E10);
\draw[brown] (E8) -- (E11);
\draw[brown] (E12) -- (E14);

\draw[orange] (E3) -- (E4) -- (E8) -- (E3);
\draw[orange] (E9) -- (E12);
\draw[orange] (E7) -- (E11);
\draw[orange] (E10) -- (E13);

\draw[green] (E4) -- (E5) -- (E9) -- (E4);
\draw[green] (E8) -- (E12);
\draw[green] (E11) -- (E14);
\draw[green] (E13) -- (E15);

\draw[brown] (C1) -- (B1);
\draw[brown] (C2) -- (B2);
\draw[brown] (C3) -- (B3);
\draw[brown] (C4) -- (B4);
\draw[brown] (C6) -- (B6);
\draw[brown] (C7) -- (B7);
\draw[brown] (C8) -- (B8);
\draw[brown] (C10) -- (B10);
\draw[brown] (C11) -- (B11);
\draw[brown] (C13) -- (B13);

\draw[orange] (C1) -- (D1);
\draw[orange] (C2) -- (D2);
\draw[orange] (C3) -- (D3);
\draw[orange] (C4) -- (D4);
\draw[orange] (C6) -- (D6);
\draw[orange] (C7) -- (D7);
\draw[orange] (C8) -- (D8);
\draw[orange] (C10) -- (D10);
\draw[orange] (C11) -- (D11);
\draw[orange] (C13) -- (D13);

\draw[green] (D1) -- (E1);
\draw[green] (D2) -- (E2);
\draw[green] (D3) -- (E3);
\draw[green] (D4) -- (E4);
\draw[green] (D6) -- (E6);
\draw[green] (D7) -- (E7);
\draw[green] (D8) -- (E8);
\draw[green] (D10) -- (E10);
\draw[green] (D11) -- (E11);
\draw[green] (D13) -- (E13);

\draw[blue] (A1) -- (B1);
\draw[blue] (A2) -- (B2);
\draw[blue] (A3) -- (B3);
\draw[blue] (A4) -- (B4);
\draw[blue] (A6) -- (B6);
\draw[blue] (A7) -- (B7);
\draw[blue] (A8) -- (B8);
\draw[blue] (A10) -- (B10);
\draw[blue] (A11) -- (B11);
\draw[blue] (A13) -- (B13);

\end{scope}

\draw[blue] (A1) -- (A2) -- (A6) -- (A1);
\draw[blue] (A7) -- (A10);
\draw[blue] (A11) -- (A13);
\draw[blue] (A14) -- (A15);

\draw[brown] (A2) -- (A3) -- (A7) -- (A2);
\draw[brown] (A6) -- (A10);
\draw[brown] (A8) -- (A11);
\draw[brown] (A12) -- (A14);

\draw[orange] (A3) -- (A4) -- (A8) -- (A3);
\draw[orange] (A9) -- (A12);
\draw[orange] (A7) -- (A11);
\draw[orange] (A10) -- (A13);

\draw[green] (A4) -- (A5) -- (A9) -- (A4);
\draw[green] (A8) -- (A12);
\draw[green] (A11) -- (A14);
\draw[green] (A13) -- (A15);

\draw[blue] (B14) -- (B15);
\draw[blue] (C14) -- (C15);
\draw[blue] (D14) -- (D15);
\draw[blue] (E14) -- (E15);

\draw[brown] (B12) -- (B14);
\draw[brown] (C12) -- (C14);
\draw[brown] (D12) -- (D14);
\draw[brown] (E12) -- (E14);

\draw[orange] (B9) -- (B12);
\draw[orange] (C9) -- (C12);
\draw[orange] (D9) -- (D12);
\draw[orange] (E9) -- (E12);

\draw[green] (B5) -- (B9);
\draw[green] (C5) -- (C9);
\draw[green] (D5) -- (D9);
\draw[green] (E5) -- (E9);

\draw[green] (E5) -- (D5);
\draw[green] (E9) -- (D9);
\draw[green] (E12) -- (D12);
\draw[green] (E14) -- (D14);
\draw[green] (E15) -- (D15);

\draw[orange] (C5) -- (D5);
\draw[orange] (C9) -- (D9);
\draw[orange] (C12) -- (D12);
\draw[orange] (C14) -- (D14);
\draw[orange] (C15) -- (D15);

\draw[brown] (C5) -- (B5);
\draw[brown] (C9) -- (B9);
\draw[brown] (C12) -- (B12);
\draw[brown] (C14) -- (B14);
\draw[brown] (C15) -- (B15);

\draw[blue] (A5) -- (B5);
\draw[blue] (A9) -- (B9);
\draw[blue] (A12) -- (B12);
\draw[blue] (A14) -- (B14);
\draw[blue] (A15) -- (B15);

\end{tikzpicture}\]
If we take the translation $\delta = (-2/3,-2/3,-1/2)$ the number of points in traslated mixed cells is $24$, but the degree of the resultant is $3 + 3 + 3 + 3 = 12$. If we start at the point $(0,0,0)$ and use the greedy algorithm, we achieve a matrix of size $20 \times 20$.
\end{example}

\section{A family of mixed subdivisions}
\label{section:2}

In this section, we give a family of lifting functions associated to the polytopes $\Delta_0,\dots,\Delta_n$ and we prove that the Canny-Emiris formula holds for the corresponding mixed subdivisions. 

\begin{definition}
We can define a hyperplane arrangement $\mathcal{H} \subset N_{\mathbb{R}}$ by considering the span of the $(n-1)$-dimensional cones of the normal fan of $\Delta$; see \cite{ziegler} for more on polytopes and hyperplane arrangements.
\end{definition}
\begin{example}
A polytope $\Delta$ (green), together with its normal fan (blue) and the hyperplane arrangement $\mathbb{H}_{\Delta}$ (red).
\[\begin{tikzpicture}

\draw[green] (0,1/2) -- (3/2,1/2);
\draw[green] (3/2,1/2) -- (1/2,1);
\draw[green] (0,1/2) -- (1/2,1);

\end{tikzpicture} \quad \begin{tikzpicture}

\draw[blue] (0,0) -- (-1/2,1/2);
\draw[blue] (0,0) -- (1/4,1/2);
\draw[blue] (0,0) -- (0,-1/2);

\end{tikzpicture} \quad \begin{tikzpicture}

\draw[red] (1/2,-1/2) -- (-1/2,1/2);
\draw[red] (-1/4,-1/2) -- (1/4,1/2);
\draw[red] (0,1/2) -- (0,-1/2);

\end{tikzpicture}\]

\end{example}
\begin{definition}
\label{definitionlifting}
Let $\mathcal{H}$ be the hyperplane arrangement associated to $\Delta$ and take a vector $v \in N_\mathbb{R}$ which does not lie in $\mathcal{H}$. We consider lifting functions $\omega_i: \mathcal{A}_i \xrightarrow[]{} \mathbb{R}$ defined as:
\[ \omega_i(x) = \lambda_i\langle v, x\rangle \quad i = 0,\dots,n \quad x \in \Delta_i\]
for $\lambda_0,\dots,\lambda_n \in \mathbb{R}$ satisfying $\lambda_0 > \dots > \lambda_n \geq 0$ and small enough. Let $\rho = (\omega_0,\dots,\omega_n)$ be a family of lifting functions providing the mixed subdivision $S(\rho)$.
\end{definition}

\begin{remark}
\label{remarkdandrea2002} 
This choice of the lifting function can also be seen as a case of the approach of \cite{dandrea2002}, in a first proof of the rational formula for generalized unmixed systems. In particular, it is possible to think of the choice of the row content $a(b)$ associated to each lattice point as trying to solve the simplex method with the lifting function as objective. This family guarantees that we are always choosing this point in the same direction; see Figure \ref{figure1} for a description of this process.
\end{remark}

\begin{theorem}
\label{admissiblechain}
$S(\rho)$ is an admissible mixed subdivision.
\end{theorem}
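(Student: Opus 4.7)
The plan is to exhibit an explicit admissible incremental chain refining $S(\rho)$ by following the template of Remark~\ref{remarkzerolifting}. For each $i = 0, \dots, n$, I would take $S(\theta_i)$ to be the mixed subdivision determined by the partial lifting $\omega^{<i} = (\omega_0, \dots, \omega_{i-1}, 0)$ on $(\Delta_0, \dots, \Delta_n)$, where the trailing zero acts on $\sum_{j \geq i} \Delta_j$. Progressively activating more nontrivial linear liftings can only refine the inf-convolution, so the chain
\[S(\theta_0) \preceq S(\theta_1) \preceq \dots \preceq S(\theta_n) \preceq S(\rho)\]
holds automatically, and it remains to verify the admissibility condition of Def.~\ref{admissible} at each step.

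The assumption $v \notin \mathcal{H}$ is the engine of the proof. Since $\mathcal{H}$ is built from the spans of the $(n-1)$-cones of the normal fan of $\Delta$, and that normal fan refines the normal fan of each summand $\Delta_j$, the vector $v$ lies in the interior of a maximal chamber of every individual normal fan. Hence each linear functional $\omega_j(x) = \lambda_j \langle v, x\rangle$ attains its minimum on $\Delta_j$ at a unique vertex $x_j^*$. For $\lambda_0 > \dots > \lambda_n \geq 0$ chosen sufficiently small, an $n$-cell of $S(\theta_i)$ decomposes as
\[D = \{x_0^*\} + \dots + \{x_{i-1}^*\} + D_i + \dots + D_n,\]
with the first $i$ components collapsed to singletons (since the active liftings each select a unique vertex) and the remaining factors forming an $n$-cell of the mixed subdivision of $\sum_{j \geq i} \Delta_j$ under the zero lifting.

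With this structure in hand, the admissibility check on each $S(\theta_i)$ splits into two cases. When the fundamental subfamily of $\mathcal{A}_D$ contains at most one support, condition~(i) is immediate. Otherwise I must verify condition~(ii): that every $b \in \mathcal{B}_{D,i}$ lies in a translated $i$-mixed cell of $S(\rho_D)$. The key observation here is that the restriction $\rho_D$ is again a linear lifting in direction $v$ with the tail of parameters $\lambda_i > \dots > \lambda_n \geq 0$, so the refinement of $D$ is governed by the same combinatorics one level down. The defining maximality $i(b) = \max\{j : t_{b,j} = 0\}$ forces the components $C_j$ with $j > i$ of the cell containing $b + \delta$ to be $0$-dimensional, which together with the singleton components inherited from $D$ yields precisely the description of an $i$-mixed cell.

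The main obstacle I foresee is turning the last paragraph into a rigorous argument: one must show that the combinatorial maximality defining $i(b)$ aligns perfectly with the Minkowski decompositions of cells of $S(\rho_D)$ forced by the ordered $\lambda_j$'s. I would proceed by induction on $n-i$, using the explicit cell description above together with the genericity of $v$ to argue that any candidate cell that is \emph{not} $i$-mixed must carry some strictly positive-dimensional component $C_j$ with $j > i$, contradicting maximality. The strict inequalities among the $\lambda_j$ are essential to break any ties that would otherwise obstruct the assignment of a well-defined $i(b)$.
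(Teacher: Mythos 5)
Your skeleton (the chain $S(\theta_0)\preceq\dots\preceq S(\theta_n)\preceq S(\rho)$ built from the partial liftings, then a case split between conditions (i) and (ii) of Def.~\ref{admissible}) is the same as the paper's, but the substance of the admissibility check rests on two incorrect claims, and the step you yourself flag as the remaining obstacle is exactly where the proof has to happen. First, it is false that for small $\lambda_j$ every $n$-cell of $S(\theta_i)$ has the form $\{x_0^*\}+\dots+\{x_{i-1}^*\}+D_i+\dots+D_n$ with the lifted components collapsed to vertices: a linear lifting $\lambda_j\langle v,\cdot\rangle$ does single out a unique minimizing vertex of $\Delta_j$, but the coherent subdivision it induces still has cells whose $j$-th component is positive-dimensional. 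The paper's own computation (Prop.~\ref{mixedsubdivision}, Fig.~\ref{figure1}) exhibits cells of $S(\theta_1)$ whose $0$-th component is a whole segment; moreover, under your claim every $n$-cell of $S(\theta_i)$ would sit inside the single translate $x_0^*+\dots+x_{i-1}^*+\sum_{j\geq i}\Delta_j$, which has strictly smaller volume than $\Delta$ whenever some $\Delta_j$ with $j<i$ is full-dimensional, so these cells could not cover $\Delta$. Second, you read the maximality in $i(b)=\max\{j \mid t_{b,j}=0\}$ backwards: for $j>i(b)$ the components of the cell containing $b$ are \emph{positive}-dimensional, not $0$-dimensional. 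After correcting both points, the configuration you describe (everything $0$-dimensional except one component) is the opposite of an $i$-mixed cell, which by definition has exactly one $0$-dimensional component, namely the $i$-th; so your final step does not establish condition (ii).

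The missing idea is how $v\notin\mathcal{H}$ actually enters. In the paper it is used precisely at the point you leave open: if some $b$ with row content $i$ lay in a cell of $S(\rho)$ that is not $i$-mixed, that cell would have a $0$-dimensional $j$-th component with $j<i$; comparing with the cell $C$ of the coarser subdivision $S(\theta_j)$ containing it, a positive-dimensional $C_j$ would force the linear functional $\langle v,\cdot\rangle$ to be constant on $C_j$, making $v$ normal to a positive-dimensional face and hence a member of $\mathcal{H}$, a contradiction; this collapses $C_j$ (and with it $D_j$) to a point and contradicts the case hypothesis. In your write-up the genericity of $v$ is only invoked to justify the vertex-collapsing picture, so this mechanism is absent. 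A secondary issue: the refinements $S(\theta_i)\preceq S(\theta_{i+1})$ are not automatic from turning on more liftings; they require $\lambda_i$ sufficiently small, via \cite[Prop.~2.11]{dandrea2020cannyemiris} (or the tropical-refinement criterion developed in the appendix), a hypothesis you use later but not where you assert the chain.
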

Proving that $S(\rho)$ is an admissible mixed subdivision consists on both proving that it has an incremental chain satisfying $S(\theta_0) \preceq \dots \preceq S(\theta_n) \preceq S(\rho)$ and that this incremental chain satisfies the conditions in Definition \ref{admissible}.
\medskip

The easiest way to prove the chain condition would be to use 
\cite[Proposition 2.11]{dandrea2020cannyemiris}, which claims that for each $i = 0,\dots,n$, there is an open neighboorhood of $0 \in U \subset \mathbb{R}^{\mathcal{A}_i}$ such that for $\omega_i \in U$ we have $S(\theta_i) \preceq S(\theta_{i+1})$. In this case, for $\lambda_{i+1}$ small enough satisfying $\lambda_i > \lambda_{i+1} > 0$, $\omega_i$ lies in $U$. Therefore, the $S(\theta_i)$ form an incremental chain.
\medskip

\color{black}

We can drop the restriction that $\lambda_{i+1}$ is small enough by proving a more general result. In the following chapter, we explore this new proof in the more general context of tropical geometry. 

\section{Tropical refinement: an extended proof of Theorem \ref{admissiblechain}}
\label{subsection:trop}

This section can be read independently with respect to the rest of sections of this article. We describe, in much broader generality than we need, the refinement of mixed subdivisions. In particular, we draw the full picture of when a coherent mixed subdivision refines another one, by only changing the lifting function in $\Delta_i$. In terms of the previous notation, we would like to know whether $S(\theta_i) \preceq S(\theta_{i+1})$ for some $i = 0,\dots,n$. Instead of studying a given mixed subdivision, we define a dual of such object by introducing tropical geometry. After proving such result using tropical geometry, the family of lifting functions given in Section \ref{section:2} will satisfy the refinement. 
\medskip

\cc{On the other hand, the study of the sparse resultant and the Canny-Emiris formula in the tropical setting is a topic of interest on its own; see \cite{Jensen_2013}. Thus, we expect the results of this section to be interesting also from the point of view of computing the tropical resultant and verifying the validity of the Canny-Emiris formula in this setting.}

\begin{remark}
As in this paper we are mainly interested in affine lifting functions, we restrict to such case. However, the following results could be reproduced for any piecewise affine lifting function.
\end{remark}

The general context of tropical geometry consists of working over rings of polynomials over $\mathbb{R}$ with the tropical operations:
\[ x \oplus y = \min(x,y) \quad x \otimes y = x + y\]

\begin{definition}
A tropical polynomial is the expression:
\[ \trop(f)(x) = \oplus_{a \in \mathcal{A}} \omega_a x^{\otimes a} = \min_{a \in \mathcal{A}}(\omega_a + ax)\]
for $x \in \mathbb{R}^n$
where $\mathcal{A}$ is the support of $f$. A tropical hypersurface $V(\trop(f)))$ in $\mathbb{R}^n$ is the set of points where the previous minimum is attained, at least, twice.
\end{definition}
\begin{remark}
We can consider the coefficients $\omega_{a}$ to be the values of a lifting function. If the lifting is affine, we have $\omega_a = \langle v, a \rangle$ for some vector $v \in N_{\mathbb{R}}$. Therefore, the tropical polynomial with coefficients $\omega_a$ would be:
\[ \min_{a \in \mathcal{A}}(a(x + v))\]
\end{remark}
\begin{definition}
A tropical system $\mathcal{T}_r$ is formed by $r + 1$ tropical polynomials with supports $P_0,\dots,P_{r} \subset M$:
\[\trop(f_i^{\omega_i})(x) = \bigoplus_{a_i \in P_i}\omega_{i,a} \otimes x^{\otimes a} = \min_{a \in P_i}\big(\omega_{i,a} + a \cdot x\big)\]
where the coefficients of the system are given by some lifting function of the $P_i$. In some references like \cite{MaclaganSturmfels}, it is important to specify a valuation in the field but here we can suppose it to be trivial.
\end{definition}
In our context (as in Remark \ref{remarkzerolifting}), we have a family of tropical systems $\mathcal{T}_i$ for $i = 0,\dots,n$ of the supports: \[\mathcal{A}_0,\dots,\mathcal{A}_{i-1},\sum_{j = i}^n\mathcal{A}_j \subset M \]The last tropical polynomial is formed by imposing $0$ coefficients, therefore, it is defined by:
\[ \min_{a \in \sum_{j = r}^n\Delta_j}\langle a,x\rangle\]
which corresponds to the normal fan of $\sum_{j = r}^n\Delta_j$. This coincides with the assumptions for $S(\theta_i)$ in Remark \ref{remarkzerolifting}.
\begin{proposition}
The expression $\min_{a \in \Delta}\langle a,x\rangle$ is achieved twice in the $(n-1)$-dimensional cones of the normal fan of $\Delta = \conv(\mathcal{A})$.
\end{proposition}
\begin{proof}
A $j$-th dimensional cone $\mathcal{N}_F \subset M_{\mathbb{R}}$ of the normal fan of $\Delta$ corresponds to a $(n-j)$-dimensional face of $\Delta$. Take $v \in \mathcal{N}_F$, then the value $\min_{a \in F}\langle a,v\rangle$ is the same for all $a \in F$, which is a face. Therefore, the minimum $\min_{a \in F}\langle a,x\rangle$ is achieved, at least,  twice in $\mathcal{N}_F$ if $j < n$. On the other hand, if the minimum is achieved at least twice at $v$, the convex hull \[\conv\{a \in \Delta \quad \min \langle a, v\rangle \text{ is achieved}\}\] is a positive dimensional face $F$ of $\Delta$, therefore $v$ lies in a cone of dimension at most $(n-1)$ in the normal fan of $\Delta$.
\end{proof}
\begin{proposition}
The expression $\min_{a \in \mathcal{A}}\langle a,x + v\rangle$ is achieved twice in the $(n-1)$-dimensional cones of the normal fan of $\Delta$ translated after $v \in N_{\mathbb{R}}$. 
\end{proposition}

\begin{proof}
The same proof as the previous works after translating by $v$.
\end{proof}
In this context, we can see the tropical system $\mathcal{T}_i$ as the superposition in $N_{\mathbb{R}}$ of the normal fans $\mathcal{F}_0,\dots,\mathcal{F}_n$ centered at different points $v_i \in N_{\mathbb{R}}$ which correspond to each of the lifting functions $\omega_i: \Delta_i \xrightarrow[]{} \mathbb{R}$.

\begin{definition}
A polyhedral complex $\mathcal{P}$ is a union of cells (bounded or unbounded) in $N_{\mathbb{R}}$ such that:
\begin{itemize}
    \item Every face of a cell in $\mathcal{P}$ is also in $\mathcal{P}$.
    \item The (possibly empty) intersection of two cells in $\mathcal{P}$ is also in $\mathcal{P}$.
\end{itemize}
\end{definition}
Fans \cc{(and also the complexes associated to tropical systems)} are a good example of polyhedral complexes. \cc{In the next theorem, we explain the duality between polyhedral complexes and mixed subdivisions, i.e., the $j$-dimensional cells of a polyhedral complex correspond to the $(n-j)$-dimensional cells of the mixed subdivision; see \cite{MaclaganSturmfels}.}
\begin{proposition}
Let $\mathcal{A}_0,\dots,\mathcal{A}_n$ be a family of supports and $\omega: \sum_{i = 0}^n\mathcal{A}_i \xrightarrow[]{} \mathbb{R}$
be a lifting function. The polyhedral complex defined by tropical system $\mathcal{T}$ taking the values of $\omega$ as coefficients is dual to the mixed subdivision $S(\omega)$.
\end{proposition}

\begin{proof}
Let $p$ be a $0$-dimensional cell of the polyhedral complex defined by $\mathcal{T}$. As it is the intersection of cones of each of the fans $\mathcal{F}_i$, there is a cell of $S(\rho)$ corresponding to the sum of the faces associated to \cc{the cones of }each of the fans. On the other hand, an $n$-cell $D$ on the mixed subdivision corresponds to a point $p$, which is the intersection of the normal cones of each of the summands $D_i$. Each of the faces of $D$ corresponds to a cell of the polyhedral complex in which $p$ is contained.
\end{proof}

Let's denote by $\mathbb{H}_i$, the hyperplane arrangement in $\mathbb{R}^n$ associated to the tropical system $\mathcal{T}_i$. Before stating the main theorem, we \cc{add} an example of the refining construction.
\begin{example}
Let $\mathcal{A}_0 = \{(0,0),(1,0),(0,1),(1,1)\}$, $\mathcal{A}_1 = \mathcal{A}_2 = \{(0,0),(1,0),(0,1)\}$ with corresponding convex hulls $\Delta_0,\Delta_1,\Delta_2$. \cc{We s}tart with the trivial mixed subdivision.
    \[        \begin{tikzpicture}

\draw [blue](0,0) -- (1/2,0);
\draw [blue](0,0) -- (0,1/2);
\draw [blue](0,1/2) -- (1/2,1/2);
\draw [blue](1/2,0) -- (1/2,1/2);

\filldraw[blue] (0,0) circle (2pt) node[anchor=east] {0};
\filldraw[blue] (1/2,0) circle (2pt) node[anchor=north] {0};
\filldraw[blue] (0,1/2) circle (2pt) node[anchor=south] {0};
\filldraw[blue] (1/2,1/2) circle (2pt) node[anchor=south] {0};

\end{tikzpicture} \quad
        \begin{tikzpicture}

\draw [orange](0,0) -- (1/2,0);
\draw [orange](0,0) -- (0,1/2);
\draw [orange](0,1/2) -- (1/2,0);

\filldraw[orange] (0,0) circle (2pt) node[anchor=east] {0};
\filldraw[orange] (1/2,0) circle (2pt) node[anchor=north] {0};
\filldraw[orange] (0,1/2) circle (2pt) node[anchor=west] {0};

\end{tikzpicture}
        \begin{tikzpicture}

\draw [brown](0,0) -- (1/2,0);
\draw [brown](0,0) -- (0,1/2);
\draw [brown](0,1/2) -- (1/2,0);
\
\filldraw[brown] (0,0) circle (2pt) node[anchor=east] {0};
\filldraw[brown] (1/2,0) circle (2pt) node[anchor=north] {0};
\filldraw[brown] (0,1/2) circle (2pt) node[anchor=west] {0};

\end{tikzpicture} 
\begin{tikzpicture}

\draw[brown] (0,0) -- (3/2,0);
\draw[brown] (3/2,0) -- (3/2,1/2);
\draw[brown] (0,0) -- (0,3/2);
\draw[brown] (1/2,3/2) -- (0,3/2);
\draw[brown] (1/2,3/2) -- (3/2,1/2);
\draw[blue] (0,0) -- (3/2,0);
\draw[blue] (3/2,0) -- (3/2,1/2);
\draw[blue] (0,0) -- (0,3/2);
\draw[blue] (1/2,3/2) -- (0,3/2);
\draw[blue] (1/2,3/2) -- (3/2,1/2);
\draw[orange] (0,0) -- (3/2,0);
\draw[orange] (3/2,0) -- (3/2,1/2);
\draw[orange] (0,0) -- (0,3/2);
\draw[orange] (1/2,3/2) -- (0,3/2);
\draw[orange] (1/2,3/2) -- (3/2,1/2);

\filldraw[gray] (0,0) circle (2pt) node[anchor=east] {0};
\filldraw[gray] (0,1/2) circle (2pt) node[anchor=east] {0};
\filldraw[gray] (0,1) circle (2pt) node[anchor=east] {0};
\filldraw[gray] (0,3/2) circle (2pt) node[anchor=east] {0};
\filldraw[gray] (1/2,0) circle (2pt) node[anchor=north ] {0};
\filldraw[gray] (1/2,1/2) circle (2pt) node[anchor= north] {0};
\filldraw[gray] (1/2,1) circle (2pt) node[anchor=west] {0};
\filldraw[gray] (1,0) circle (2pt) node[anchor=north] {0};
\filldraw[gray] (1,1/2) circle (2pt) node[anchor=west] {0};
\filldraw[gray] (3/2,0) circle (2pt) node[anchor=north] {0};
\filldraw[gray] (1,1/2) circle (2pt) node[anchor=west] {0};
\filldraw[gray] (1/2,3/2) circle (2pt) node[anchor=west] {0};
\filldraw[gray] (3/2,1/2) circle (2pt) node[anchor=west] {0};
\filldraw[gray] (1,1) circle (2pt) node[anchor=west] {0};

\end{tikzpicture}
\]In this case, the corresponding tropical system is given by the inner normal fan to the Minkowski sum, which corresponds to the superposition of the normal fans of each summand.
\[
         \begin{tikzpicture}

        \draw [brown](0,0) -- (0,2);
        \draw [brown](0,0) -- (2,0);
        \draw [brown](0,0) -- (-2,-2);
        \draw [orange](0,0) -- (0,2);
        \draw [orange](0,0) -- (2,0);
        \draw [orange](0,0) -- (-2,-2);
        \draw [blue](0,0) -- (0,2);
        \draw [blue] (0,0) -- (2,0);
        \draw [blue] (0,0) -- (0,-2);
        \draw [blue] (0,0) -- (-2,0);

    \end{tikzpicture}
    \quad
         \begin{tikzpicture}
        \draw [dashed,red](-2,0) -- (2,0);
        \draw [dashed,red](-2,-2) -- (2,2);
        \draw [dashed,red](0,-2) -- (0,2);

    \end{tikzpicture}
    \]

The dashed drawing represents the central hyperplane arrangement which we will denote as $\mathbb{H}_0$. Any lifting of $\Delta_0$ will refine the subdivision. However, we can see that refinement corresponds to moving the point $(0,0)$ of the blue fan to an adjacent chamber \cc{in} $\mathbb{H}_0$. Let's take $(2,2)$ as a normal vector. This means lifting $\Delta_0$ after an affine function of type $c - 2x - 2y$. We can choose any constant $c$ (say $c = 4$) as it will give the same lifting. The subdivision looks like:
\[   
        \begin{tikzpicture}

\draw [blue](0,0) -- (1/2,0);
\draw [blue](0,0) -- (0,1/2);
\draw [blue](0,1/2) -- (1/2,1/2);
\draw [blue](1/2,0) -- (1/2,1/2);

\filldraw[blue] (0,0) circle (2pt) node[anchor=east] {4};
\filldraw[blue] (1/2,0) circle (2pt) node[anchor=north] {2};
\filldraw[blue] (0,1/2) circle (2pt) node[anchor=south] {2};
\filldraw[blue] (1/2,1/2) circle (2pt) node[anchor=south] {0};

\end{tikzpicture}
        \begin{tikzpicture}

\draw [orange](0,0) -- (1/2,0);
\draw [orange](0,0) -- (0,1/2);
\draw [orange](0,1/2) -- (1/2,0);

\filldraw[orange] (0,0) circle (2pt) node[anchor=east] {0};
\filldraw[orange] (1/2,0) circle (2pt) node[anchor=north] {0};
\filldraw[orange] (0,1/2) circle (2pt) node[anchor=west] {0};

\end{tikzpicture}
        \begin{tikzpicture}

\draw [brown](0,0) -- (1/2,0);
\draw [brown](0,0) -- (0,1/2);
\draw [brown](0,1/2) -- (1/2,0);
\
\filldraw[brown] (0,0) circle (2pt) node[anchor=east] {0};
\filldraw[brown] (1/2,0) circle (2pt) node[anchor=north] {0};
\filldraw[brown] (0,1/2) circle (2pt) node[anchor=west] {0};

\end{tikzpicture}
    \begin{tikzpicture}

\draw[blue] (0,0) -- (1/2,0);
\draw[blue] (1/2,0) -- (1/2,1/2);
\draw[blue] (0,0) -- (0,1/2);
\draw[blue] (0,1/2) -- (1/2,1/2);
\draw[blue] (0,3/2) -- (1/2,3/2);
\draw[blue] (3/2,0) -- (3/2,1/2);
\draw[brown] (1/2,1/2) -- (3/2,1/2);
\draw[brown] (1/2,3/2) -- (3/2,1/2);
\draw[brown] (1/2,0) -- (3/2,0);
\draw[brown] (1/2,3/2) -- (1/2,1/2);
\draw[brown] (0,3/2) -- (0,1/2);
\draw[orange] (1/2,1/2) -- (3/2,1/2);
\draw[orange] (1/2,3/2) -- (3/2,1/2);
\draw[orange] (1/2,0) -- (3/2,0);
\draw[orange] (1/2,3/2) -- (1/2,1/2);
\draw[orange] (0,3/2) -- (0,1/2);

\filldraw[gray] (0,0) circle (2pt) node[anchor=east] {4};
\filldraw[gray] (0,1/2) circle (2pt) node[anchor=east] {2};
\filldraw[gray] (0,1) circle (2pt) node[anchor=east] {2};
\filldraw[gray] (0,3/2) circle (2pt) node[anchor=east] {2};
\filldraw[gray] (1/2,0) circle (2pt) node[anchor=north ] {2};
\filldraw[gray] (1/2,1/2) circle (2pt) node[anchor= north] {0};
\filldraw[gray] (1/2,1) circle (2pt) node[anchor=west] {0};
\filldraw[gray] (1,0) circle (2pt) node[anchor=north] {2};
\filldraw[gray] (1,1/2) circle (2pt) node[anchor=west] {0};
\filldraw[gray] (3/2,0) circle (2pt) node[anchor=north] {2};
\filldraw[gray] (1,1/2) circle (2pt) node[anchor=west] {0};
\filldraw[gray] (1/2,3/2) circle (2pt) node[anchor=west] {0};
\filldraw[gray] (3/2,1/2) circle (2pt) node[anchor=west] {0};
\filldraw[gray] (1,1) circle (2pt) node[anchor=west] {0};

\end{tikzpicture}.\]Moreover, the corresponding tropical system $\mathcal{T}_1$ and the corresponding (not central) hyperplane arrangement $\mathbb{H}_1$ are:
\[
         \begin{tikzpicture}

        \draw [brown](0,0) -- (0,2);
        \draw [brown](0,0) -- (2,0);
        \draw [brown](0,0) -- (-2,-2);
        \draw [orange](0,0) -- (0,2);
        \draw [orange](0,0) -- (2,0);
        \draw [orange](0,0) -- (-2,-2);
        \draw [blue](1,1) -- (1,2);
        \draw [blue] (1,1) -- (2,1);
        \draw [blue] (1,1) -- (1,-2);
        \draw [blue] (1,1) -- (-2,1);

    \end{tikzpicture}\quad 
         \begin{tikzpicture}

        \draw [dashed,red](0,-2) -- (0,2);
        \draw [dashed,red](-2,0) -- (2,0);
        \draw [dashed,red](-2,-2) -- (2,2);
        \draw [dashed,red](1,1) -- (1,2);
        \draw [dashed,red] (1,1) -- (2,1);
        \draw [dashed,red] (1,1) -- (1,-2);
        \draw [dashed,red] (1,1) -- (-2,1);

    \end{tikzpicture}.
    \]
We claim that we can move the orange fan and we will be refining the mixed subdivision. In particular, moving the orange fan to each of the adjacent cells on the hyperplane arrangement corresponds to all the possible ways to refine the previous mixed subdivision. For instance, if we take the translation given by the vector $(1,-1)$, which would be the normal vector to the affine lifting $c - x + y$ with $c = 1$. The mixed subdivision looks like:
        \[   
        \begin{tikzpicture}

\draw [blue](0,0) -- (1/2,0);
\draw [blue](0,0) -- (0,1/2);
\draw [blue](0,1/2) -- (1/2,1/2);
\draw [blue](1/2,0) -- (1/2,1/2);

\filldraw[blue] (0,0) circle (2pt) node[anchor=east] {4};
\filldraw[blue] (1/2,0) circle (2pt) node[anchor=north] {2};
\filldraw[blue] (0,1/2) circle (2pt) node[anchor=south] {2};
\filldraw[blue] (1/2,1/2) circle (2pt) node[anchor=south] {0};

\end{tikzpicture}
        \begin{tikzpicture}

\draw [orange](0,0) -- (1/2,0);
\draw [orange](0,0) -- (0,1/2);
\draw [orange](0,1/2) -- (1/2,0);

\filldraw[orange] (0,0) circle (2pt) node[anchor=east] {1};
\filldraw[orange] (1/2,0) circle (2pt) node[anchor=north] {0};
\filldraw[orange] (0,1/2) circle (2pt) node[anchor=west] {2};

\end{tikzpicture}
        \begin{tikzpicture}

\draw [brown](0,0) -- (1/2,0);
\draw [brown](0,0) -- (0,1/2);
\draw [brown](0,1/2) -- (1/2,0);
\
\filldraw[brown] (0,0) circle (2pt) node[anchor=east] {0};
\filldraw[brown] (1/2,0) circle (2pt) node[anchor=north] {0};
\filldraw[brown] (0,1/2) circle (2pt) node[anchor=west] {0};

\end{tikzpicture}
    \begin{tikzpicture}

\draw[blue] (0,0) -- (1/2,0);
\draw[blue] (1/2,0) -- (1/2,1/2);
\draw[blue] (0,0) -- (0,1/2);
\draw[blue] (0,1/2) -- (1/2,1/2);
\draw[blue] (0,1) -- (1/2,1);
\draw[blue] (0,3/2) -- (1/2,3/2);
\draw[blue] (1,0) -- (1,1/2);
\draw[blue] (3/2,0) -- (3/2,1/2);
\draw[brown] (1,1/2) -- (1,1);
\draw[brown] (1,1/2) -- (3/2,1/2);
\draw[brown] (1,1) -- (3/2,1/2);
\draw[brown] (1,0) -- (3/2,0);
\draw[brown] (1/2,1) -- (1/2,1/2);
\draw[brown] (0,1) -- (0,1/2);
\draw[orange] (1/2,1) -- (1,1);
\draw[orange] (1/2,1) -- (1/2,3/2);
\draw[orange] (1,1) -- (1/2,3/2);
\draw[orange] (0,1) -- (0,3/2);
\draw[orange] (1,1/2) -- (1/2,1/2);
\draw[orange] (1,0) -- (1/2,0);

\filldraw[gray] (0,0) circle (2pt) node[anchor=east] {5};
\filldraw[gray] (0,1/2) circle (2pt) node[anchor=east] {3};
\filldraw[gray] (0,1) circle (2pt) node[anchor=east] {3};
\filldraw[gray] (0,3/2) circle (2pt) node[anchor=east] {4};
\filldraw[gray] (1/2,0) circle (2pt) node[anchor=north ] {3};
\filldraw[gray] (1/2,1/2) circle (2pt) node[anchor= north] {1};
\filldraw[gray] (1/2,1) circle (2pt) node[anchor=west] {1};
\filldraw[gray] (1,0) circle (2pt) node[anchor=north] {2};
\filldraw[gray] (1,1/2) circle (2pt) node[anchor=west] {0};
\filldraw[gray] (3/2,0) circle (2pt) node[anchor=north] {2};
\filldraw[gray] (1,1/2) circle (2pt) node[anchor=west] {0};
\filldraw[gray] (1/2,3/2) circle (2pt) node[anchor=west] {2};
\filldraw[gray] (3/2,1/2) circle (2pt) node[anchor=west] {0};
\filldraw[gray] (1,1) circle (2pt) node[anchor=west] {0};
\end{tikzpicture}
\]
and the tropical system after the translation vector $(1,-1)$, corresponds to:
\[
         \begin{tikzpicture}
        \draw [brown](0,0) -- (0,2);
        \draw [brown](0,0) -- (2,0);
        \draw [brown](0,0) -- (-2,-2);
        \draw [orange](1/2,-1/2) -- (1/2,2);
        \draw [orange](1/2,-1/2) -- (2,-1/2);
        \draw [orange](1/2,-1/2) -- (-1,-2);
        \draw [blue](1,1) -- (1,2);
        \draw [blue] (1,1) -- (2,1);
        \draw [blue] (1,1) -- (1,-2);
        \draw [blue] (1,1) -- (-2,1);
    \end{tikzpicture}
    \]
\end{example}
We now recapitulate the notation used so far. Let $\omega_i: \mathcal{A}_i \xrightarrow[]{} \mathbb{R}$ be the lifting function. As in Theorem \ref{admissiblechain}, $S(\theta_i)$ be the mixed subdivisions of the candidate incremental chain given by the lifting functions $(\omega_0,\dots,\omega_{i-1},0,\dots,0)$. Let $\mathcal{T}_i$ be the tropical systems dual to each of the mixed subdivisions $S(\theta_i)$ for $i = 0,\dots,n$. Let $\mathbb{H}_i$ be the hyperplane arrangement associated to each of the tropical systems. At this point, we have all the ingredients to state and prove the tropical refinement.
\begin{theorem}(Tropical refinement) \label{tropicalrefinement} Let $i = 1,\dots,n$. The mixed subdivision $S(\theta_i)$ refines $S(\theta_{i-1})$, if and only if, the normal vector to the lifting function $\omega_{i-1}: \mathcal{A} \xrightarrow[]{} \mathbb{R}$ lives in a chamber of $\mathbb{H}_i$ adjacent to $0 \in \mathbb{R}^n$.
\end{theorem}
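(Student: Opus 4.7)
The plan is to use the duality established earlier between mixed subdivisions and polyhedral complexes coming from tropical systems, and to translate the refinement condition on $S(\theta_i)$ and $S(\theta_{i-1})$ into a condition on how the associated polyhedral complexes $\mathcal{P}_i$ and $\mathcal{P}_{i-1}$ relate. Under the duality, cells of dimension $j$ of $\mathcal{P}_i$ correspond to cells of codimension $j$ in $S(\theta_i)$; refinement of mixed subdivisions thus translates into refinement of polyhedral complexes, which in turn means that the union of tropical hypersurfaces constituting $\mathcal{P}_i$ contains, as a set, the union constituting $\mathcal{P}_{i-1}$.

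First, I unpack the difference between $\mathcal{T}_{i-1}$ and $\mathcal{T}_i$. Their first $i-1$ polynomials coincide, so these contribute the same tropical hypersurfaces to both complexes. The remaining polynomial in $\mathcal{T}_{i-1}$, supported on $\sum_{j=i-1}^n\mathcal{A}_j$ with trivial lifting, contributes the $(n-1)$-skeleton of the normal fan of $\sum_{j=i-1}^n\Delta_j$ at the origin. In $\mathcal{T}_i$, this is replaced by two polynomials, whose tropical hypersurfaces are the $(n-1)$-skeleton of the normal fan of $\Delta_{i-1}$ translated by $-v$ (where $v$ is the normal vector of $\omega_{i-1}$) and the $(n-1)$-skeleton of the normal fan of $\sum_{j=i}^n\Delta_j$ at the origin. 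Since the normal fan of a Minkowski sum is the common refinement of the normal fans of its summands, at $v=0$ the union of these two new hypersurfaces coincides with the single hypersurface they replace, so $\mathcal{P}_i$ refines $\mathcal{P}_{i-1}$ trivially and the corresponding mixed subdivision refinement holds.

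The heart of the argument is to track how this refinement property varies with $v$. Moving $v$ translates the normal fan of $\Delta_{i-1}$ but leaves the other tropical hypersurfaces unchanged. Consequently the combinatorial type of $\mathcal{P}_i$ is constant on each connected component of $\mathbb{R}^n$ minus the arrangement of walls where a face of the translated fan becomes tangent to a face of a fixed hypersurface; these walls are exactly the hyperplanes of $\mathbb{H}_i$, so the combinatorial type of $\mathcal{P}_i$ is constant on each chamber of $\mathbb{H}_i$. Now by Prop. 2.11 of \cite{dandrea2020cannyemiris}, for $v$ sufficiently close to $0$ the refinement of $S(\theta_{i-1})$ by $S(\theta_i)$ holds, so refinement holds throughout any chamber of $\mathbb{H}_i$ whose closure contains $0$.

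Conversely, if $v$ lies in no chamber adjacent to $0$, then any continuous path from $0$ to $v$ must pass into a chamber whose closure is disjoint from $0$. In such a chamber, a face of the translated fan has already crossed a fixed hypersurface, and this crossing produces a cell of $\mathcal{P}_i$ that no longer sits inside a cell of $\mathcal{P}_{i-1}$, so refinement fails. The main obstacle in carrying out this argument will be making rigorous the identification of the walls of combinatorial type in $v$-space with the hyperplanes of $\mathbb{H}_i$, and showing that crossing such a wall from a chamber adjacent to $0$ genuinely breaks the cell-containment between the two complexes rather than only altering the combinatorial type in a refinement-preserving way; this requires a careful case analysis of which combinatorial moves occur as $v$ crosses each type of wall.
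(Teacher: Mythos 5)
Your overall strategy---dualize to the tropical complexes and detect refinement by tracking how the translation vector $v$ of the moved normal fan crosses walls---is the same wall-crossing idea as the paper's, but the step you build everything on is false and the two steps you explicitly defer are precisely where the content of the theorem lies. The claimed translation ``$S(\theta_i)$ refines $S(\theta_{i-1})$ iff the union of tropical hypersurfaces of $\mathcal{P}_i$ contains that of $\mathcal{P}_{i-1}$'' does not hold: already in the paper's appendix example, $S(\theta_1)$ refines the trivial subdivision $S(\theta_0)$, yet the skeleton dual to $S(\theta_0)$ (the full normal fan of $\Delta$ at the origin) is \emph{not} contained in the complex of $\mathcal{T}_1$, because translating the factor fan of $\Delta_0$ to a nonzero $v$ moves its walls off their original position (equal subdivisions induced by different liftings give an even simpler counterexample, and one can also arrange skeleton containment without refinement). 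So ``a cell of $\mathcal{P}_i$ no longer sits inside a cell of $\mathcal{P}_{i-1}$'' cannot serve as a proxy for failure of refinement without a different combinatorial criterion.

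Moreover, the two points you postpone---that the locus in $v$-space where the combinatorics of $\mathcal{P}_i$ change is exactly the arrangement $\mathbb{H}_i$, and that crossing one of its hyperplanes out of a chamber adjacent to $0$ genuinely destroys refinement rather than merely altering the combinatorial type---are the whole ``if and only if''; Prop.~2.11 of \cite{dandrea2020cannyemiris} only re-proves the already known refinement for $v$ in a small neighborhood of $0$, and your converse direction is not argued at all beyond the flawed containment heuristic. The paper bridges this gap with an intermediate lemma: $S(\theta_{i-1}) \preceq S(\theta_i)$ holds if and only if every ray of the translated normal fan is adjacent to the same cells of the tropical complex before and after the translation (``preserves adjacencies''), and an adjacency of a ray changes precisely when $v$ crosses a hyperplane of the arrangement containing that ray. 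Supplying some such combinatorial refinement criterion (in place of hypersurface containment), together with the wall-crossing analysis you defer, is what is still needed for your plan to become a proof.
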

We now construct the tools needed for proving this result.
\begin{definition}
We say that a ray $r$ of the normal fan $\mathcal{F}_i$ preserves adjacencies if it is adjacent to the same cells in $\mathcal{T}_i$ and $\mathcal{T}_{i-1}$.
\end{definition}

\begin{lemma}
Let $S(\theta_i)$ be a mixed subdivision of $\Delta_0,\dots,\Delta_{i-1},\sum_{j = i}^n \Delta_j$ for $i = 0,\dots,n$. The lifting of $\Delta_i$ will give $S(\theta_i) \preceq S(\theta_{i+1})$, if and only if, each ray of $\mathcal{F}_i$ preserves the adjacencies after the translation. 
\end{lemma}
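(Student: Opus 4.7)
The plan is to translate the refinement condition through the duality between mixed subdivisions and their dual tropical polyhedral complexes. Let $\mathcal{P}_i$ (resp.\ $\mathcal{P}_{i+1}$) denote the polyhedral complex in $N_{\mathbb{R}}$ dual to $\mathcal{T}_i$ (resp.\ $\mathcal{T}_{i+1}$). Passing from $\mathcal{T}_i$ to $\mathcal{T}_{i+1}$ amounts to splitting the normal fan of $\sum_{j\geq i}\Delta_j$ into two superposed fans: the fan $\mathcal{F}_i$ of $\Delta_i$, now rigidly translated from the origin to the normal vector $v_i$ prescribed by $\omega_i$, and the fan of $\sum_{j\geq i+1}\Delta_j$ still centered at the origin. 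All other fans $\mathcal{F}_0,\dots,\mathcal{F}_{i-1}$ stay fixed, so the only change in the combinatorics of the superposition is driven by how the cones of the shifted copy of $\mathcal{F}_i$ intersect those of the remaining fans.

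For the backward direction, I would argue that if every ray of $\mathcal{F}_i$ preserves its adjacencies, then each $1$-cone of the translated $\mathcal{F}_i$ meets the same sequence of cells of the other fans as in the untranslated configuration. By the duality, a $0$-cell of $\mathcal{P}_{i+1}$ arising from an intersection of the translated cones of $\mathcal{F}_i$ with cones of the other fans corresponds dually to an $n$-cell of $S(\theta_{i+1})$ whose components in $\Delta_0,\dots,\Delta_{i-1},\Delta_{i+1},\dots,\Delta_n$ are the same as the components of the $n$-cell of $S(\theta_i)$ containing the corresponding untranslated intersection. Hence every $n$-cell of $S(\theta_{i+1})$ lies inside an $n$-cell of $S(\theta_i)$, and the inclusion of lower-dimensional cells follows by taking faces, giving refinement.

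For the forward direction, I would argue by contrapositive: if some ray $r$ of $\mathcal{F}_i$ fails to preserve its adjacencies after translation, then the translated $r$ crosses a hyperplane of the arrangement spanned by the cones of the remaining fans $\mathcal{F}_0,\dots,\mathcal{F}_{i-1},\mathcal{F}_{i+1}\cup\dots\cup\mathcal{F}_n$. This crossing produces a $0$-cell in $\mathcal{P}_{i+1}$ whose dual $n$-cell in $S(\theta_{i+1})$ picks up components from two different cells of $S(\theta_i)$ along the facet of $\Delta_i$ dual to $r$. Such a cell cannot be contained in any single $n$-cell of $S(\theta_i)$, contradicting $S(\theta_i)\preceq S(\theta_{i+1})$.

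The main obstacle will be making the dictionary between the combinatorial adjacencies of rays in the superposed fans and the component structure of cells in the mixed subdivision fully rigorous. In particular, the adjacency condition is stated only on $1$-dimensional cones of $\mathcal{F}_i$, so I will need to verify that preservation along rays forces preservation along all higher-dimensional cones of $\mathcal{F}_i$ as well. This should follow from the fact that for $\lambda_i>0$ small enough the translation vector $v_i$ lies in the star of the origin in the arrangement $\mathbb{H}_{i+1}$ (per Theorem \ref{tropicalrefinement}), together with convexity of the cones of $\mathcal{F}_i$; these ensure that the movement of any higher-dimensional cone is controlled by the movement of its bounding rays.
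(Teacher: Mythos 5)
Your overall route---dualizing through the tropical polyhedral complexes and reading refinement off the adjacencies of the translated rays of $\mathcal{F}_i$---is the same as the paper's, and your two directions are exactly the contrapositives of the two paragraphs of the paper's proof. The genuine problem is how you propose to close the step you yourself flag, namely that preservation of adjacencies along the rays of $\mathcal{F}_i$ controls what happens for all higher-dimensional cones. You want to get this from Theorem~\ref{tropicalrefinement} together with a smallness assumption on $\lambda_i$ (so that the translation vector $v_i$ stays in the star of the origin). This is circular: in the paper the present lemma is precisely the tool from which Theorem~\ref{tropicalrefinement} is deduced, so the theorem is not available here. It also silently changes the statement: the lemma carries no smallness hypothesis and must hold for an \emph{arbitrary} lifting of $\Delta_i$---it is exactly when $v_i$ has been moved into a chamber not adjacent to the origin that the lemma has to detect the failure of $S(\theta_i)\preceq S(\theta_{i+1})$, whereas in the small-$\lambda_i$ regime refinement is automatic (cf.\ \cite[Prop.~2.11]{dandrea2020cannyemiris}), so under your restriction the equivalence loses most of its content.

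The paper sidesteps this by arguing both implications as contrapositives directly at the level of dual cells, with no smallness anywhere: if some ray of $\mathcal{F}_i$ changes its adjacencies, the $0$-cell of the dual complex witnessing the change is dual to an $n$-cell of $S(\theta_{i+1})$ that is not contained in the corresponding cell of $S(\theta_i)$; conversely, if some cell $C$ of $S(\theta_{i+1})$ is contained in no cell of $S(\theta_i)$, then, since the only object that moved is the rigid translate of $\mathcal{F}_i$, the change in the adjacency pattern of the dual cell of $C$ must already be visible on rays of $\mathcal{F}_i$. If you prefer to keep your direct argument for the ``if'' direction, you must supply a self-contained argument, valid for an arbitrary translation of $\mathcal{F}_i$, that the adjacencies of the maximal cones in the superposition are determined by those of their bounding rays (or restructure the direction as above); appealing to the downstream theorem or to a small-$\lambda_i$ hypothesis does not fill the gap.
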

\begin{proof}
Suppose there is a ray $r$ that doesn't preserve an adjacencies. Then, take the $0$-dimensional cell of the corresponding polyhedral complex where this adjacency fails and it must correspond to an $n$-cell of $S(\theta_{i+1})$ that is not contained in the cell of $S(\theta_i)$ corresponding to such adjacency.
\medskip

On the other hand, take a cell $C$ of $S(\theta_{i+1})$ that is not contained in any of the cells of $S(\theta_i)$ and, as we only lifted the polytope $\Delta_i$, the corresponding dual cell on the polyhedral complex has to fail to be adjacent to the same rays.
\end{proof}

\begin{proof}(of the Theorem \ref{tropicalrefinement})
Consider $p$ as a point ($0$-dimensional cell) in the polyhedral complex that is dual to an $n$-cell $D$ of $S(\theta_{i-1})$. Let $v$ be the normal vector to the lifting function $\omega_i: \mathcal{A}_i \xrightarrow[]{} \mathbb{R}$. We have to prove that $v$ lies in an adjacent cell to $0$ in $\mathbb{H}_i$, if and only if, $D$ is contained in a cell $D'$ of $S(\theta_{k})$. \\

Firstly, suppose there was not such cell $D'$. This would mean that the adjacencies would not be preserved and we can find a ray $r$ in $\mathcal{F}_i$ where this property is failing. Consider the ray of a fan $\mathcal{F}_k$ for $k = 0,\dots,i-1$ where this adjacency has changed and this means that we have crossed a hyperplane containing such ray in the previous fan. \\

On the other hand, if there is such cell $D'$, then the lifting of $\Delta_i$ preserves adjacencies. However, if we had moved $v$ to a non-adjacent cell to $0$, we would have crossed a hyperplane therefore, we would be able to find rays in such hyperplane where the adjacencies are not preserved.
\end{proof}
This result extends the proposition 2.11 on \cite[Proposition 2.11]{dandrea2020cannyemiris} and gives a full picture of refinement of mixed subdivisions. Therefore, we naturally understand all the ways to refine a given mixed subdivision $S(\theta_i)$ with affine lifting functions on $\Delta_i$. 
\begin{corollary}
The chambers of the hyperplane arrangement $\mathbb{H}_i$ adjacent to $0 \in \mathbb{R}^n$ are in one to one correspondence to all the possible ways to refine $S(\theta_{i})$. In particular, if $S(\theta_{i})$ is tight, the chambers of $\mathbb{H}_i$ correspond to tight mixed subdivisions.
\end{corollary}

In the context of Theorem \ref{admissiblechain}, in the direction of $v \notin \mathbb{H}_{\Delta}$, the function $\langle\lambda_{i}v, x \rangle$ will reach the hyperplane arrangement $\mathbb{H}_i$ when $\lambda_i = \lambda_{i-1}$. Therefore, for any $0 < \lambda_i < \lambda_{i-1}$, the subdivision $S(\theta_{i+1})$ will refine $S(\theta_{i})$ for $i = 0,\dots,n$. 

\begin{theorem}
    The mixed subdivision $S(\rho)$ in Definition \ref{definitionlifting} is admissible.
\end{theorem}

\begin{proof}

All the lattice points with row content $0$ are $0$-mixed. Therefore, $S(\theta_0)$ satisfies $ii)$ in Definition \ref{admissible}. Let $D$ be an $n$-cell of $S(\theta_i)$. If $\dim D_i = 0$, then the fundamental subfamily of $\mathcal{A}_D$ is at most $\{i\}$ as shown in Remark \ref{remarkresult}. We show that, for our choice of the lifting function, the rest of cells $D$ satisfy $ii)$ in Definition \ref{admissible}. 

\medskip
Let $D \in S(\theta_i)$ such that $\dim D_i > 0$. Suppose that this cell contains a lattice point $b \in \mathcal{B}$ that has row content $i$ but is not $i$-mixed. Therefore, this lattice point $b$ will be in a cell of $S(\rho)$ with a $0$-dimensional $j$-th component for some $j < i$. Take $C \supset D$ in $S(\theta_j)$ containing the previous lattice point $b$. If $\dim C_j > 0$, then the lifting function $\omega_j = \lambda_j\langle v,x\rangle$ takes the same value in all the points of $C_j$. Therefore, the vector $v$ is normal to $C_j$ and has to be contained in the hyperplane arrangement associated to $\Delta$. As this is not the case, $\dim C_j = 0$ and consequently $\dim D_j = 0$, contradicting the initial hypothesis. 
\end{proof}

\cc{Thus, the Canny-Emiris formula holds for the family of lifting functions that we have defined; see \cite[Theorem 4.27]{dandrea2020cannyemiris}.}

\begin{figure}
\centering
\begin{tabular}{||c c c||} 
 \hline
 Step & Lifting & Subdivision \\
 \hline
$S(\theta_0)$ &  \begin{tikzpicture}

\draw[blue] (0,0) -- (3/2,0);
\draw[green] (0,0) -- (3/2,0);
\draw[brown] (0,0) -- (3/2,0);

\filldraw[black] (0,0) circle (1pt) node[anchor=south] {0};
\filldraw[black] (3/2,0) circle (1pt) node[anchor=south] {0};

\end{tikzpicture} & \begin{tikzpicture}

\draw[blue] (0,0) -- (3/2,0);
\draw[green] (0,0) -- (3/2,0);
\draw[brown] (0,0) -- (3/2,0);

\filldraw[black] (0,0) circle (1pt) node[anchor=south] {0};
\filldraw[black] (3/2,0) circle (1pt) node[anchor=south] {$a_{0,0} + a_{1,0} + a_{2,0}$};

\end{tikzpicture} \\ 
 \hline
 $S(\theta_1)$ & \begin{tikzpicture}

\draw[blue] (0,1) -- (1/2,0);
\draw[green] (1/2,0) -- (3/2,0);
\draw[brown] (1/2,0) -- (3/2,0);

\filldraw[black] (0,1) circle (1pt) node[anchor=south] {0};
\filldraw[black] (1/2,0) circle (1pt) node[anchor=east] {$\lambda_0 v_j$};
\filldraw[black] (3/2,0) circle (1pt) node[anchor=south] {$\lambda_0 v_j$};

\end{tikzpicture} & \begin{tikzpicture}

\draw[blue] (0,0) -- (1/2,0);
\draw[green] (1/2,0) -- (3/2,0);
\draw[brown] (1/2,0) -- (3/2,0);

\filldraw[black] (0,0) circle (1pt) node[anchor=south] {0};
\filldraw[black] (1/2,0) circle (1pt) node[anchor=north] {$a_{0,0}$};
\filldraw[black] (3/2,0) circle (1pt) node[anchor=south] {$a_{0,0} + a_{1,0} + a_{2,0}$};

\end{tikzpicture} \\
 \hline
  $S(\theta_2)$ & \begin{tikzpicture}

\draw[blue] (0,3/2) -- (1/2,1/2);
\draw[brown] (1/2,1/2) -- (1,0);
\draw[green] (1,0) -- (3/2,0);

\filldraw[black] (0,3/2) circle (1pt) node[anchor=south] {0};
\filldraw[black] (1/2,1/2) circle (1pt) node[anchor=east] {$\lambda_0 v_j$};
\filldraw[black] (1,0) circle (1pt) node[anchor=north] {$(\lambda_0 + \lambda_1) v_j$};
\filldraw[black] (3/2,0) circle (1pt) node[anchor=south] {$(\lambda_0 + \lambda_1) v_j$};

\end{tikzpicture} & \begin{tikzpicture}

\draw[blue] (0,0) -- (1/2,0);
\draw[brown] (1/2,0) -- (1,0);
\draw[green] (1,0) -- (3/2,0);

\filldraw[black] (0,0) circle (1pt) node[anchor=south] {0};
\filldraw[black] (1/2,0) circle (1pt) node[anchor=north] {$a_{0,0}$};
\filldraw[black] (1,0) circle (1pt) node[anchor=south] {$a_{0,0} + a_{1,0}$};
\filldraw[black] (3/2,0) circle (1pt) node[anchor=west] {$a_{0,0} + a_{1,0} + a_{2,0}$};

\end{tikzpicture} \\
 \hline
  $S(\rho)$ & \begin{tikzpicture}

\draw[blue] (0,7/4) -- (1/2,3/4);
\draw[brown] (1/2,3/4) -- (1,1/4);
\draw[green] (1,1/4) -- (3/2,0);

\filldraw[black] (0,7/4) circle (1pt) node[anchor=north] {0};
\filldraw[black] (1/2,3/4) circle (1pt) node[anchor=east] {$\lambda_0 v_j$};
\filldraw[black] (1,1/4) circle (1pt) node[anchor=west] {$(\lambda_0 + \lambda_1) v_j$};
\filldraw[black] (3/2,0) circle (1pt) node[anchor=north] {$(\lambda_0 + \lambda_1 + \lambda_2) v_j$};

\end{tikzpicture} & \begin{tikzpicture}

\draw[blue] (0,0) -- (1/2,0);
\draw[brown] (1/2,0) -- (1,0);
\draw[green] (1,0) -- (3/2,0);

\filldraw[black] (0,0) circle (1pt) node[anchor=south] {0};
\filldraw[black] (1/2,0) circle (1pt) node[anchor=north] {$a_{0,0}$};
\filldraw[black] (1,0) circle (1pt) node[anchor=south] {$a_{0,0} + a_{1,0}$};
\filldraw[black] (3/2,0) circle (1pt) node[anchor=west] {$a_{0,0} + a_{1,0} + a_{2,0}$};

\end{tikzpicture} \\
 \hline
\end{tabular}
\caption{This table explains how the process of passing from the proposed lifting on $\color{blue} \Delta_0$, $\color{brown} \Delta_1, \color{green} \Delta_2$ to the mixed subdivision works in the $j$-th coordinate for $v_j < 0$ for any of the two components of Example \ref{smallexample}. One clearly sees that, for instance, $\overline{0a_{0,0}e_0} \subset \color{blue} D_0$, if and only if, $x_0 \leq a_{0,0}$ for $x \in \color{blue} D$. The product of two subdivisions of this form gives the mixed subdivision in the figure of Example \ref{smallexample}.}
\label{figure1}
\end{figure}

\section{The case of $n$-zonotopes}
\label{section:3}

\cc{In this section, we restrict our attention to a particular family of Newton polytopes, i.e. $n$-zonotopes. In this subfamily, it will be easier to give a combinatorial description of the lattice points associated to the rows of the Canny-Emiris matrix, after applying the greedy algorithm.} For simplicity, we suppose that the lattice is $M = \mathbb{Z}^n$. 

\begin{definition}
A zonotope is a polytope given as a sum of line segments. An $n$-zonotope is generated by $n$ line segments, which span an $n$-dimensional lattice.
\end{definition}

Consider linearly independent vectors $v_1,\dots,v_n \in \mathbb{Z}^{n}$ and the line segments $\overline{0v_1},\dots,\overline{0v_n} \subset \mathbb{R}^n$ forming an $n$-zonotope $Z \subset \mathbb{R}^n$. If the Newton polytopes are $n$-zonotopes whose defining line segments are integer multiples of the $\overline{0v_j}$, we can write the supports of the system as:
\[ 
\mathcal{A}'_i = \big\{ \sum_{j = 1}^n \lambda_j v_j \in \mathbb{Z}^n\, | \quad \lambda_j \in \mathbb{Z}, \quad 0  \leq \lambda_j \leq a_{ij}\big\}.
\]
for some $a_{ij} \in \mathbb{Z}_{>0}$. Let $V$ be the nonsingular matrix whose columns are the coordinates of the $v_j$ in the canonical basis of $\mathbb{Z}^n$ for $j = 1,\dots,n$ and consider it as a monomorphism of lattices $V: \mathbb{Z}^n \xrightarrow[]{} \mathbb{Z}^n$ of rank $n$. Let $e_1,\dots,e_n$ be the canonical basis of $\mathbb{Z}^n$.

\begin{corollary} \label{corollaryresultant}
Let $\mathcal{A}'_0, \dots, \mathcal{A}'_n$ be the previous family of supports, then $\Res_{\mathcal{A}'} = \Res_{\mathcal{A}}^{|\det(V)|}$, where:
\[ 
\mathcal{A}_i = \big\{(b_j)_{j = 1,\dots,n} \in \mathbb{Z}^n \quad | \quad 0 \leq b_j \leq a_{ij} \big\} \quad i = 0,\dots,n
\]
\end{corollary}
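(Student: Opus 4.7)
The proof is a direct application of Lemma \ref{lemmamonomrphism} to the monomorphism $V \colon \mathbb{Z}^n \to \mathbb{Z}^n$ whose columns are $v_1,\dots,v_n$. The plan is to verify the two hypotheses of that lemma: first that $V$ is indeed a monomorphism of lattices of rank $n$, and second that $V(\mathcal{A}_i) = \mathcal{A}'_i$ for each $i = 0,\dots,n$; then to identify the index $[\mathbb{Z}^n : V(\mathbb{Z}^n)]$ with $|\det(V)|$.

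For the first point, since $v_1,\dots,v_n$ were assumed to be linearly independent in $\mathbb{Z}^n$, the matrix $V$ has nonzero determinant, so it defines an injective $\mathbb{Z}$-linear map between two lattices of rank $n$. For the second point, an arbitrary element of $\mathcal{A}_i$ is a tuple $(b_1,\dots,b_n) \in \mathbb{Z}^n$ with $0 \leq b_j \leq a_{ij}$, and its image under $V$ is exactly $\sum_{j=1}^n b_j v_j$, which by definition is a generic element of $\mathcal{A}'_i$. This correspondence is bijective because $V$ is injective. Hence $V(\mathcal{A}_i) = \mathcal{A}'_i$ as sets of lattice points.

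For the index, I would use the standard fact from the structure theory of finitely generated abelian groups: for any monomorphism $\phi \colon \mathbb{Z}^n \to \mathbb{Z}^n$ represented by an integer matrix $V$, the quotient $\mathbb{Z}^n / V(\mathbb{Z}^n)$ is a finite abelian group of order $|\det(V)|$ (this can be seen via the Smith normal form, or directly from the geometric interpretation of the determinant as the covolume of the sublattice). Thus $[\mathbb{Z}^n : V(\mathbb{Z}^n)] = |\det(V)|$.

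Combining these ingredients, Lemma \ref{lemmamonomrphism} applied to $\phi = V$ and $\mathcal{A} = (\mathcal{A}_0,\dots,\mathcal{A}_n)$ yields
\[ \Res_{\mathcal{A}'} = \Res_{V(\mathcal{A})} = \Res_{\mathcal{A}}^{[\mathbb{Z}^n : V(\mathbb{Z}^n)]} = \Res_{\mathcal{A}}^{|\det(V)|}, \]
which is exactly the claim. There is no real obstacle here beyond bookkeeping; the only thing to be careful about is to match the direction of the monomorphism with the convention of Lemma \ref{lemmamonomrphism} (so that the index in the exponent is $[M' : \phi(M)]$ and not its reciprocal), and to ensure that the translation invariance in Remark \ref{zero} is respected, which it is since both families already contain $0$.
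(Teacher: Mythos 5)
Your proposal is correct and follows the same route as the paper: apply Lemma \ref{lemmamonomrphism} to the monomorphism $V$ sending $e_j \mapsto v_j$ (so that $V(\mathcal{A}_i)=\mathcal{A}'_i$) and identify the index $[\mathbb{Z}^n : V(\mathbb{Z}^n)]$ with $|\det(V)|$ via the Smith normal form. The extra bookkeeping you include (injectivity, the explicit matching of supports, translation invariance) only spells out what the paper leaves implicit.
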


\begin{proof}
Using Lemma \ref{lemmamonomrphism}, we can view the map $V: \mathbb{Z}^n \xrightarrow[]{} \mathbb{Z}^n$ as a monomorphism of lattices sending the canonical basis $e_i$ to $v_i$ for $i = 1,\dots,n$. The absolute value of the determinant $|\det(V)|$ is the index of the image. This last result follows from the reduction of $V$ to its Smith normal form \cite[Theorem 2.3]{stanleysnf}.
\end{proof}

\begin{remark}
\label{normals}
The normal vectors of the $n$-zonotope are given by $n$ pairs $(\eta_j,-\eta_j)_{j=1,\dots,n}$ in $N$. The results that follow in this section could be proved without using Corollary \ref{corollaryresultant}, after changing $b_j$ by $\langle b, \eta_j\rangle$ for $j = 1,\dots,n$, choosing $\eta_j$ to be the element in the pair such that $0 \leq \langle b,\eta_j \rangle \leq a_{ij}$.

\end{remark}

In order to prove our results, we assume that the $a_{ij}$ are ordered, meaning that $0 < a_{0j} \leq a_{1j} \leq \dots \leq a_{n-1j}$ and $j = 1,\dots,n$, where we exclude $\mathcal{A}_n$ form this assumption; notice that Example \ref{exampledrop} wouldn't satisfy this property without excluding $\mathcal{A}_n$. Consider a translation $\delta \in \mathbb{R}^n$ which is negative in each component and small enough. Then, the lattice points in translated cells of a mixed subdivision of the previous system are:
\[
\mathcal{B} = \big\{(b_j)_{j = 1,\dots,n} \in \mathbb{Z}^n \quad | \quad 0 \leq b_j < \sum_{i = 0}^n a_{ij}\big\}.
\]

Let $v \notin \cup_{i = 1}^n\{x_j = 0\}$ define the mixed subdivision $S(\rho)$ as in the previous section. We assume $v_j < 0$ for $j = 1,\dots,n$ and get the following result.
\begin{proposition}\cite[Proposition 3.1]{issacversion}
\label{mixedsubdivision}
Let $b \in \mathcal{B}$ and $i \in \{0,\dots,n\}$. Then:
\[ t_{b,i} = \big|\big\{j \in \{1,\dots,n\} \quad | \quad \sum_{k = 0}^{i-1} a_{kj} \leq  b_j  < \sum_{k = 0}^i a_{kj}\big\}\big|\]
and the row content $i(b)$ is the maximum index in $\{0,\dots,n\}$ such that:
\[
\not\exists j \in \{1,\dots,n\}: \quad \sum_{k = 0}^{i(b)-1} a_{kj} \leq  b_j  < \sum_{k = 0}^{i(b)} a_{kj}
\]
with the support $a(b) \in \mathcal{A}_{i(b)}$ satisfying:
\[a(b)_j = \begin{cases} 
      0   & b_j < \sum_{k = 0}^{i(b)-1}a_{kj}, \\
      a_{i(b)j} & b_j \geq \sum_{k = 0}^{i(b)}a_{kj}.
   \end{cases}\]
\end{proposition}
\begin{remark}
If $v_j > 0$, we would change the inequalities by $\sum_{k = i}^{n} a_{kj} \leq  b_j  < \sum_{k = i - 1}^i a_{kj}$, but the results that follow would not change. Any other mixed subdivisions of this particular system can also be formed this way.
\end{remark}
\begin{definition}
\label{definitiontypefunction}
The type function $\varphi_b: \{1,\dots,n\} \xrightarrow{} \{0,\dots,n\}$ associated to each lattice point $b \in \mathcal{B}$ is defined as the vector of indices satisfying:
\[\sum_{k = 0}^{\varphi_b(j)-1}a_{kj} \leq b_j < \sum_{k = 0}^{\varphi_b(j)}a_{kj}\]
Following Proposition \ref{mixedsubdivision}, it satisfies that $t_{b,i} = |\varphi_b^{-1}(i)|$.
\end{definition}




\begin{definition}
\label{greedysubset}

We define the greedy subset $\mathcal{G} \subset \mathcal{B}$ to be formed by all the lattice points $b \in \mathcal{B}$ such that:
\[\sum_{i = 0}^I t_{b,i} \leq I + 1 \quad \forall I < n.\]
\end{definition}





    
\begin{theorem}\label{maintheorem}\cite[Theorem 3.1, Theorem 3.2]{issacversion}  Let $b \in \mathcal{G}$ and $b' \notin \mathcal{G}$. Then, $b' \notin b - a(b) + \mathcal{A}_{i(b)}$. Moreover, if $b,b' \in \mathcal{G}$ and $b$ lines in a mixed cell, we can reach $b'$ from $b$.
\end{theorem}

\begin{corollary}
\label{bigconclusion}\cite[Corollary 3.3]{issacversion}
The size of the matrix $\mathcal{H}_{\mathcal{G}}$ is:
\[ \sum_{\varphi_b: \{1,\dots,n\} \xrightarrow[]{} \{0,\dots,n\}}\prod_{j = 1}^na_{\varphi_b(j)j}\]
where the sum is over the functions that satisfy $\varphi_b^{-1}(\{0,\dots,I\}) \leq I + 1 \quad \forall I < n$.
\end{corollary}

\color{black}
In \cite{issacversion}, the authors showed that if one is able to embed the mixed subdivision of a given polynomial system into the mixed subdivision of an $n$-zonotope system, this construction also applies there. This is the case of multihomogeneous systems, in which the Newton polytopes are products of simplices. For such a case, there exist exact determinantal resultant formulas obtained by using the Weyman complex and other tecniques \cite{bender2021koszultype,mixedgrobnerbasistowards,dickensteinemirismultihomo03,emimantzdeterminant,sturmfelszelevinsky}. \cc{However, none of those formulas is directly using the Canny-Emiris formula. The use of type functions makes our approach easier to generalize to general Newton polytoeps. In the next section, we state a conjecture on which is the expected minimal size of these matrices.}

\begin{figure}
\centering
\begin{tabular}{ |c|c|c|c| } 
 \hline
 Dimension & Canny-Emiris & Greedy & Resultant degree \\ 
 \hline
 2 & 9 & 8 & 6 \\ 
 3 & 64 & 50 & 24  \\ 
 4 & 625 & 432 & 360 \\ 
 5 & 7776 & 4802 & 3720 \\ 
 \hline
\end{tabular}
\caption{This table represents the size of the matrices we achieve for zonotopes of dimensions from $2$ to $5$ with $a_{ij} = 1$ using the greedy approach versus the original Canny-Emiris formula. We also compare to the degree of the resultant.}
\label{figure2}
\end{figure}

\section{Which are the minimal matrices?}
\label{sec::4}

A very natural question for sparse polynomial systems is which are the matrices of smallest size that one can build to represent the sparse resultant. In this section, we explain how this is related to the Hilbert function of a generic polynomial system with some given supports and state a conjecture saying that if the Hilbert function of a generic system attains the value zero at some multi-degree $\mathbf{m}$, there is a lifting and a greedy subset $\mathcal{G}$ associated with the degree $\mathbf{m}$ providing a resultant formula. We state the conjecture for multi-homogeneous systems but one can easily generalize it to the sparse case by using degrees in the Cox ring of a toric variety \cite{coxlittleschneck}.
\medskip

One can motivate this question through Gr\"obner basis. For homogeneous systems, it is known that the computation of a Gr\"obner basis finishes once a certain degree in the generators is achieved. This degree coincides with an algebraic invariant called Castelnuovo-Mumford regularity and it was proven by Bayer and Stillman in \cite{bayer_criterion_1987} that it provides a tight bound to the degrees involved in a Gr\"obner basis. This result \cc{uses the degree reverse lexicographical order after a generic change of coordinates}. 
\medskip

In the case of generic coefficients (still in the homogeneous case), this coincides with the degree at which one has to arrive in order to build resultant matrices: the Macaulay bound \cite{emirismourrain,macaulay}. There are analogues of the Macaulay bound \cite[Proposition 8.2.2]{benderthesis} but they \cc{are} not tight in general \cite{Awane2005}. In terms of the resultant construction, we can relate this to the mixed subdivisions that we considered in the previous sections.
\medskip

In \cite{issacversion}, the authors only considered affine lifting functions, for the sake of simplicity on the combinatorics of the greedy algorithm. However, the results are known to be not optimal, in the sense that there exist other lifting functions that provide smaller resultant matrices. Therefore, a natural question is to ask which subsets $\mathcal{G} \subset \mathcal{B}$ can be obtained using the greedy algorithm for some lifting function and which of them are minimal. 

\begin{example}
\label{exampledrl}
Consider the same bilinear system as in Example \ref{smallexample}. Another possible non-affine mixed subdivision $S(\rho)$ is the following:
\[\begin{tikzpicture}

\draw[brown] (0,1/2) -- (1/2,1/2);
\draw[brown] (0,1/2) -- (0,1);
\draw[brown] (0,1) -- (1/2,1/2);
\draw[brown] (1/2,0) -- (1,0);
\draw[brown] (1/2,1) -- (1,1/2);
\draw[brown] (1/2,3/2) -- (1,1);
\draw[brown] (1/2,3/2) -- (1,3/2);
\draw[brown] (1,1) -- (1,3/2);
\draw[brown] (3/2,1/2) -- (3/2,1);

\draw[blue] (0,0) -- (1/2,0);
\draw[blue] (0,0) -- (0,1/2);
\draw[blue] (1/2,0) -- (0,1/2);
\draw[blue] (1,0) -- (1/2,1/2);
\draw[blue] (3/2,0) -- (1,1/2);
\draw[blue] (3/2,1/2) -- (1,1);
\draw[blue] (3/2,3/2) -- (3/2,1);
\draw[blue] (3/2,3/2) -- (1,3/2);
\draw[blue] (1,3/2) -- (3/2,1);

\draw[green] (0,3/2) -- (0,1);
\draw[green] (1/2,1) -- (0,1);
\draw[green] (1/2,1) -- (1/2,3/2);
\draw[green] (0,3/2) -- (1/2,3/2);
\draw[green] (1,1/2) -- (1/2,1/2);
\draw[green] (3/2,0) -- (1,0);
\draw[green] (1,1) -- (1,1/2);
\draw[green] (3/2,0) -- (3/2,1/2);

\filldraw[black] (0+1/5,1/5) circle (2pt) node[anchor=east] {};
\filldraw[black] (0+1/5,1/2+1/5) circle (2pt) node[anchor=east] {};
\filldraw[black] (0+1/5,1+1/5) circle (2pt) node[anchor=east] {};
\filldraw[red] (1/2+1/5,0+1/5) circle (2pt) node[anchor=south ] {};
\filldraw[red] (1/2+1/5,1/2+1/5) circle (2pt) node[anchor= south] {};
\filldraw[red] (1/2+1/5,1+1/5) circle (2pt) node[anchor=west] {};
\filldraw[red] (1+1/5,0+1/5) circle (2pt) node[anchor=south] {};
\filldraw[red] (1+1/5,1/2+1/5) circle (2pt) node[anchor=west] {};
\filldraw[red] (1+1/5,1+1/5) circle (2pt) node[anchor=west] {};

\end{tikzpicture}.
\]
The red dots indicate the greedy subset that one obtains by starting the algorithm at the lattice points in mixed cells. A possible lifting function giving this mixed subdivision is $\omega_0 = (0,1,1,3), \omega_1 = (0,2,2,5), \omega_2 = (0,3,3,7)$, which is not affine.
\end{example}

Assume that we are working with coefficients in the field of complex numbers $\mathbb{C}$. Consider $\mathcal{A}_0,\dots,\mathcal{A}_n$ be a family of supports corresponding to a multihomogeneous system. \cc{Assume also that }each of the $\mathcal{A}_i$ can be associated to a multidegree in $\mathbf{d}_i \in \mathbb{Z}^d$. The generic Hilbert function is defined as:
$$ \HF(\mathbf{d}) = \dim(S/I)_{\mathbf{d}} \quad \mathbf{d} \in \mathbb{Z}^d$$
where $I$ is the ideal in $\mathbb{C}[M]$ after specializing the $u_{i,a}$ to generic values in $\mathbb{C}$. This generic Hilbert function exists as we cannot have two different generic behaviours for coefficients in $\mathbb{C}$. Moreover, it is natural to think that we can associate some of the subsets $\mathcal{G} \subset \mathcal{B}$ to some multi-degrees. For instance, the whole subset $\mathcal{B}$ can be naturally associated to the multi-homogeneous Macaulay bound. This idea can easily be extended to the sparse case by considering generic values of the Hilbert function associated to degrees in the Cox ring of a toric variety.

\begin{conjecture}
\label{conjecture}
Assume that $\mathcal{G} \subset \mathcal{B}$ is a set of lattice points in the translaated cells that corresponds to a multi-degree $\mathbf{d} \in \mathbb{Z}^d$. If $\HF(\mathbf{d}) = 0$, then there is a lifting function $\omega \in \prod_{i = 0}^n\mathbb{R}^{\mathcal{A}_i}$ such that $\mathcal{G}$ is the greedy subset of such system. Moreover, if $\HF(\mathbf{d}') \neq 0$ for $\mathbf{d}' \lneq \mathbf{d}$, $\mathcal{G}$ contains no greedy subset.
\end{conjecture}

It is natural to think that this lifting function must be related to the degree reverse lexicograpical order. Namely, that for two monomials $x^A, x^B \in k[M]$, we have:
$$ x^A <_{\mathfrak{drl}} x^B \iff \omega(A) \leq \omega(B). $$
Here $\omega(A)$ refers to evaluating the exponents of $x^A$ in the inf-convolution of $\omega$ as in Definition \ref{infconvolution}.

\begin{example}
The lifting function given in Example \ref{exampledrl} satisfies this degree reverse lexicographical condition. Moreover, the subset $\mathcal{B}$ obtained by considering all the lattice points in translated cells can be related to the bi-degree $(2,2)$. However, the greedy subset $\mathcal{G}$ that we have found corresponds to the bi-degree $(2,1)$. In particular, this bi-degree corresponds to some existing exact resultant formulas \cite{dickensteinemirismultihomo03}.
\end{example}

\section{Applications}
\label{sec::5}

Resultants and matrix representations of polynomial systems can be used in applications for solving polynomial systems. The ideas that are useful for solving a large number of problems in applications and offer standard methods that can be applied in several contexts. We add two examples of these applications in computer vision and design: the $5$-points problem and the implicitization problem. We have updated the \href{https://github.com/carleschecanualart/CannyEmiris}{\color{blue}JULIA implementation \color{gray}} in \cite{issacversion} so that it includes these examples.

\subsection{Computer vision: the $5$ point problem}

A typical computer vision problem is interested in computing the displacement of a camera between two positions in a static environment. Namely, we would like to find the displacement of a rigid body between two snapshots taken by a stationary camera. The identifiable features of the body include only points. 
\medskip

Usually, a minimum number of 5 point matches is available. The algebraic problem reduces to a well-constrained system of polynomial equations and we are able to give a closed-form solution.
Typically, computer vision applications use at least 8 points in order to reduce the number of possible solutions to one, in generic coordinates. In addition, computing the displacement reduces to a linear problem and the effects of noise in the input can be diminished \cite{LonguetHiggins1981ACA}. 
\medskip

Let $a_i \in (\mathbb{R}^3)$ for $i = 1,\dots,5$ be the $5$ points in the first snapshot and $a'_i \in (\mathbb{R}^3)$ for $i = 1,\dots,5$ be the points in the second snapshot. A quaternion formulation of this problem was proposed in \cite{hornreconstruction}. This quaternion formulation reduces the problem to solving the polynomial system given by the following equations in the variables $q \in \mathbb{R}^3$ (representing a rotation) and $d \in \mathbb{R}^3$ (representing a translation)
$$
(a_i^Tq)(d^Ta'_i) +  a_i^Ta'_i + (a_i \times q)^Ta'_i + (a_i \times q)^T(d \times a'_i) + a_i^T(d \times a'_i) = 0, \quad i = 1,\dots,5,
$$
$$
1 - d^Tq = 0,
$$
where $\times$ represents the usual exterior product. The first five equations represents each of the $5$ displacements while the last one represents a normalization between the vectors $q,d$. This system is bilinear in the two groups of variables. We can solve it by building the $u$-resultant. Namely, we introduce a new linear equation $P_u = u_0 + u_1d_1 + u_2d_2 + u_3d_3 + u_4q_1 + u_5q_2 + u_6q_3$.
Once we consider the resultant of this system, we get a polynomial that factors into linear forms, whose coefficients are the values of the solutions (they are a finite number in this case); see also \cite{emiristhesis} for a similar approach.
\medskip

Using our approach, we get a square matrix of dimension $784$ while the degree of the resultant is $770$.

\subsection{An implicitization problem}

The implicitization problem has many appications in computer-aided design, since it is one of the major problems in changing representation. It has been addressed by a multitude of methods, including sparse resultants e.g. \cite{kalinkaetal}. Namely, given the equations of a surface in $\mathbb{R}^2$ given by two parameters $s,t$:
$$ 
\phi: \mathbb{R}^2 \xrightarrow[]{} \mathcal{X} \subset \mathbb{R}^3, \quad (s,t) \xrightarrow[]{} (\phi_1(s,t), \phi_2(s,t), \phi_3(s,t)),
$$
we would like to find a polynomial equation in three variables $P(X,Y,Z)$ that represents the surface $\mathcal{X} = \{P(X,Y,Z) = 0\}$. Therefore, we are forced to eliminate the variables $s,t$ from the polynomial system:
$$ X - \phi_1(s,t), Y - \phi_2(s,t), Z - \phi_3(s,t).$$
This sums up to computing the resultant, assuming that the Newton polytope of $\phi_i$ for $i = 1,\dots,3$ is a zonotope.

\begin{example}
    Suppose that $\phi_1 = 1 + 3s - 2st + s^2t, \phi_2 = -1 -3s + 4st + 5s^2t, \phi_3 = -2 + 5s + 4st - 1s^2t$. Our construction provides the following matrix for the resultant computation of the implicitization problem:
  $$  \begin{pmatrix}
Z + 2 & -4 & 0 & -5 & 1 & 0 & 0 & 0 \\
Y + 1 & -4 & 0 & 3 & -5 & 0 & 0 & 0 \\
0 & 0 & Z + 2 & -4 & 0 & -5 & 1 & 0 \\
0 & 0 & 0 & \color{green} Z + 2 & -4 & 0 & -5 & \color{green} 1 \\
X - 1 & 2 & 0 & -3 & -1 & 0 & 0 & 0 \\
0 & 0 & Y + 1 & -4 & 0 & 3 & -5 & 0 \\
0 & 0 & X - 1 & 2 & 0 & -3 & -1 & 0 \\
0 & 0 & 0 &\color{green} Y + 1 & -4 & 0 & 3 & \color{green} -5
    \end{pmatrix} $$
    where the principal minor is marked in green.
\end{example}


\textbf{Acknowledgements.}
Both authors have received partial support from EU’s H2020 research \& innovation programme under the Marie Skłodowska-Curie grant agreement No 860843 (\href{http://grapes-network.eu/}{GRAPES}). We thank Elias Tsigaridas and Christos Konaxis for their support, and Carlos D'Andrea for the time spent answering our questions. We thank the anonymous reviewers of the first version of this article published in the proceedings of ISSAC 2022 for relevant comments.

\printbibliography[]

\appendix
\color{black}

\end{document}